\documentclass[3p]{elsarticle}
% Packages and macros go here
\usepackage{bm}
\usepackage[parfill]{parskip}
\usepackage[utf8]{inputenc}
\usepackage{url}
\usepackage{amsmath,amssymb,amsfonts,amsthm}
\usepackage{color}
\usepackage{bm}
\usepackage{graphicx}
\usepackage{subcaption}
\usepackage{lipsum}
\usepackage{amsfonts}
\usepackage{graphicx}
\usepackage{epstopdf}
\usepackage{array}
\usepackage[hidelinks]{hyperref}
\usepackage{lineno}
\usepackage{bbm}
\usepackage{xfrac}
\usepackage{varwidth}
\usepackage[nice]{nicefrac}

\usepackage{amsmath}
\usepackage{algorithm}
\usepackage[noend]{algpseudocode}

\ifpdf
  \DeclareGraphicsExtensions{.eps,.pdf,.png,.jpg}
\else
  \DeclareGraphicsExtensions{.eps}
\fi

\newtheorem{theorem}{Theorem}

\newtheorem{remark}{Remark}

\DeclareMathOperator*{\inter}{{\nicefrac{1}{2}}}

\makeatletter
\newsavebox\myboxA
\newsavebox\myboxB
\newlength\mylenA

\newcommand*\xoverline[2][0.75]{%
    \sbox{\myboxA}{$\m@th#2$}%
    \setbox\myboxB\null% Phantom box
    \ht\myboxB=\ht\myboxA%
    \dp\myboxB=\dp\myboxA%
    \wd\myboxB=#1\wd\myboxA% Scale phantom
    \sbox\myboxB{$\m@th\overline{\copy\myboxB}$}%  Overlined phantom
    \setlength\mylenA{\the\wd\myboxA}%   calc width diff
    \addtolength\mylenA{-\the\wd\myboxB}%
    \ifdim\wd\myboxB<\wd\myboxA%
       \rlap{\hskip 0.5\mylenA\usebox\myboxB}{\usebox\myboxA}%
    \else
        \hskip -0.5\mylenA\rlap{\usebox\myboxA}{\hskip 0.5\mylenA\usebox\myboxB}%
    \fi}
\makeatother

\usepackage{tikz}
\usetikzlibrary{chains, positioning, arrows.meta, bending, shapes.arrows}
\usetikzlibrary{positioning,shapes,arrows,calc,shapes.geometric,backgrounds,fit}
\usepackage{pifont}% http://ctan.org/pkg/pifont
\usetikzlibrary{positioning,shapes,arrows,calc,shapes.geometric,backgrounds,fit}

\tikzstyle{block} = [rectangle,draw,minimum width=2em,align=center,rounded corners, minimum height=2em,scale=1.0]
\tikzstyle{blockleft} = [rectangle,draw,minimum width=2em,align=left,rounded corners, minimum height=2em,scale=1.0]
\tikzstyle{bigblock} = [rectangle,draw,minimum width=8em,align=center,rounded corners, minimum height=4em,scale=1.0]
\tikzstyle{connect} = [draw,-latex']
\tikzstyle{decision} = [diamond, draw, 
    text width=4.5em, text badly centered, node distance=3cm, inner sep=0pt]
%\tikzstyle{block} = [rectangle, draw, fill=blue!20, 
%    text width=5em, text centered, rounded corners, minimum height=4em]
\tikzstyle{line} = [draw, -latex']
\tikzstyle{cloud} = [draw, ellipse,fill=red!20, node distance=3cm,
    minimum height=2em]
\tikzstyle{linenoarrow}=[draw]

\makeatletter
\let\save@mathaccent\mathaccent
\newcommand*\if@single[3]{%
  \setbox0\hbox{${\mathaccent"0362{#1}}^H$}%
  \setbox2\hbox{${\mathaccent"0362{\kern0pt#1}}^H$}%
  \ifdim\ht0=\ht2 #3\else #2\fi
  }
%The bar will be moved to the right by a half of \macc@kerna, which is computed by amsmath:
\newcommand*\rel@kern[1]{\kern#1\dimexpr\macc@kerna}
%If there's a superscript following the bar, then no negative kern may follow the bar;
%an additional {} makes sure that the superscript is high enough in this case:
\newcommand*\widebar[1]{\@ifnextchar^{{\wide@bar{#1}{0}}}{\wide@bar{#1}{1}}}
%Use a separate algorithm for single symbols:
\newcommand*\wide@bar[2]{\if@single{#1}{\wide@bar@{#1}{#2}{1}}{\wide@bar@{#1}{#2}{2}}}
\newcommand*\wide@bar@[3]{%
  \begingroup
  \def\mathaccent##1##2{%
%Enable nesting of accents:
    \let\mathaccent\save@mathaccent
%If there's more than a single symbol, use the first character instead (see below):
    \if#32 \let\macc@nucleus\first@char \fi
%Determine the italic correction:
    \setbox\z@\hbox{$\macc@style{\macc@nucleus}_{}$}%
    \setbox\tw@\hbox{$\macc@style{\macc@nucleus}{}_{}$}%
    \dimen@\wd\tw@
    \advance\dimen@-\wd\z@
%Now \dimen@ is the italic correction of the symbol.
    \divide\dimen@ 3
    \@tempdima\wd\tw@
    \advance\@tempdima-\scriptspace
%Now \@tempdima is the width of the symbol.
    \divide\@tempdima 10
    \advance\dimen@-\@tempdima
%Now \dimen@ = (italic correction / 3) - (Breite / 10)
    \ifdim\dimen@>\z@ \dimen@0pt\fi
%The bar will be shortened in the case \dimen@<0 !
    \rel@kern{0.6}\kern-\dimen@
    \if#31
      \overline{\rel@kern{-0.6}\kern\dimen@\macc@nucleus\rel@kern{0.4}\kern\dimen@}%
      \advance\dimen@0.4\dimexpr\macc@kerna
%Place the combined final kern (-\dimen@) if it is >0 or if a superscript follows:
      \let\final@kern#2%
      \ifdim\dimen@<\z@ \let\final@kern1\fi
      \if\final@kern1 \kern-\dimen@\fi
    \else
      \overline{\rel@kern{-0.6}\kern\dimen@#1}%
    \fi
  }%
  \macc@depth\@ne
  \let\math@bgroup\@empty \let\math@egroup\macc@set@skewchar
  \mathsurround\z@ \frozen@everymath{\mathgroup\macc@group\relax}%
  \macc@set@skewchar\relax
  \let\mathaccentV\macc@nested@a
%The following initialises \macc@kerna and calls \mathaccent:
  \if#31
    \macc@nested@a\relax111{#1}%
  \else
%If the argument consists of more than one symbol, and if the first token is
%a letter, use that letter for the computations:
    \def\gobble@till@marker##1\endmarker{}%
    \futurelet\first@char\gobble@till@marker#1\endmarker
    \ifcat\noexpand\first@char A\else
      \def\first@char{}%
    \fi
    \macc@nested@a\relax111{\first@char}%
  \fi
  \endgroup
}
\makeatother

\journal{arXiv.org}

%strongly recommended
%\numberwithin{theorem}{section}

% Declare title and authors, without \thanks
%\newcommand{\TheTitle}{Stability of spatial discretizations for dynamical low-rank approximation} 
\newcommand{\TheTitle}{A robust collision source method for rank adaptive dynamical low-rank approximation in radiation therapy} 

\date{\today}

% Sets running headers as well as PDF title and authors
%\headers{\RunningTitle}{\TheAuthors}

\usepackage{amsopn}

%\DeclareMathOperator{\argmin}{arg\min}

% FundRef data to be entered by SIAM
%<funding-group>
%<award-group>
%<funding-source>
%<named-content content-type="funder-name"> 
%</named-content> 
%<named-content content-type="funder-identifier"> 
%</named-content>
%</funding-source>
%<award-id> </award-id>
%</award-group>
%</funding-group>

%\newcommand{\comment}[1]{{#1}}

\newcommand{\RomanNumeralCaps}[1]
    {\MakeUppercase{\romannumeral #1}}

\journal{arXiv}

\begin{document}
%\linenumbers
\begin{frontmatter}

\title{\TheTitle}

\author[adressJonas]{Jonas Kusch}
\author[adressPia1,adressPia2,adressPia3]{Pia Stammer}

\address[adressJonas]{Karlsruhe Institute of Technology, Mathematical Science and Computational Methods, Karlsruhe, Germany, jonas.kusch@kit.edu}
\address[adressPia1]{Karlsruhe Institute of Technology, Steinbuch Centre for Computing,
Karlsruhe, Germany, pia.stammer@kit.edu}
\address[adressPia2]{German Cancer Research Center - DKFZ, Department of Medical Physics in
Radiation Oncology, Heidelberg, Germany}
\address[adressPia3]{HIDSS4Health - Helmholtz Information and Data Science School for Health,
Karlsruhe/Heidelberg, Germany}

\begin{abstract}
Deterministic models for radiation transport describe the density of radiation particles moving through a background material. In radiation therapy applications, the phase space of this density is composed of energy, spatial position and direction of flight. The resulting six-dimensional phase space prohibits fine numerical discretizations, which are essential for the construction of accurate and reliable treatment plans.
In this work, we tackle the high dimensional phase space through a dynamical low-rank approximation of the particle density. Dynamical low-rank approximation (DLRA) evolves the solution on a low-rank manifold in time. Interpreting the energy variable as a pseudo-time lets us employ the DLRA framework to represent the solution of the radiation transport equation on a low-rank manifold for every energy. Stiff scattering terms are treated through an efficient implicit energy discretization and a rank adaptive integrator is chosen to dynamically adapt the rank in energy. To facilitate the use of boundary conditions and reduce the overall rank, the radiation transport equation is split into collided and uncollided particles through a collision source method. Uncollided particles are described by a directed quadrature set guaranteeing low computational costs, whereas collided particles are represented by a low-rank solution. It can be shown that the presented method is L$^2$-stable under a time step restriction which does not depend on stiff scattering terms. Moreover, the implicit treatment of scattering does not require numerical inversions of matrices. Numerical results for radiation therapy configurations as well as the line source benchmark underline the efficiency of the proposed method. 
\end{abstract}

\begin{keyword}
Dynamical low-rank approximation, radiation therapy, kinetic equations, rank adaptivity, model order reduction
\end{keyword}

\end{frontmatter}

\section{Introduction}

Radiation therapy is one of the main tools in cancer treatment. In treatment planning, the set-up of radiation beams is optimized such that the tumor receives the prescribed dose, while minimizing the damage of surrounding risk organs or healthy tissue. Here, exact and fast dose calculation is of basic importance. Exact dose calculations however require the solution of a high dimensional coupled system of linear transport equations. Therefore, clinical dose calculation algorithms often rely on simplified \textit{pencil beam} models~\cite{ahnesjo1999dose}, which are based on the Fermi-Eyges theory of radiative transfer \cite{eyges1948multiple}. Although these models are computationally efficient, they can only describe layered heterogeneities \cite{hogstrom1981electron}. Thus simulation results are inaccurate, especially in cases including air cavities or other inhomogeneities \cite{hogstrom1981electron, krieger2005montecarlo}. 

More exact dose calculation can be achieved by an appropriate Monte Carlo (MC) algorithm, where individual interacting particles are directly simulated \cite{andreo1991montecarlo}. However, while recent performance-tuned MC implementations have achieved large run time improvements, the high computational costs and associated statistical noise still limit their clinical usage \cite{fippel2004monte, jia2012gpu}. 

The application of deterministic Boltzmann equations for dose calculation can achieve similar accuracy as MC simulations, but also exhibit the same computational complexity for grid-based solutions \cite{borgers1998complexity}.

In this work, we tackle the challenges arising from high-dimensional phase spaces in radiation therapy applications through dynamical low-rank approximation (DLRA) \cite{KochLubich07}. DLRA represents the solution by a low-rank ansatz. When the solution is an $n$ by $m$ time-dependent matrix with huge $n$ and $m$, such a low-rank representation can be defined by a singular value decomposition (SVD) truncated to a (small) rank $r$.\footnote{Note that the low-rank solution does not require a diagonal $r$ by $r$ singular value matrix. In fact, DLRA uses dense coefficient matrices.} Time evolution equations for each of the low-rank factors of the SVD are then derived by minimizing the residual while maintaining the solution's low-rank structure. When the original method requires $O(n\cdot m)$ operations per time step, the updates of the low-rank factors only require $O(r^2\cdot(n + m))$ operations. Robust integrators for the time evolution of these factors are the matrix projector--splitting integrator (PSI)~\cite{LubichOseledets} and the recently developed ``unconventional'' basis-update \& Galerkin low-rank matrix integrator of \cite{CeL21}. Unlike the unconventional integrator, the projector--splitting integrator can be extended to high order. However, since it includes a backward step in time, the projector--splitting integrator can yield instabilities for parabolic equations. Moreover, the backward time step can result in unstable schemes for hyperbolic problems, which is not an issue for the unconventional integrator \cite{kusch2021stability}. Following \cite{kusch2021stability}, stable discretizations of the PSI can however be constructed by using the continuous DLRA formulation proposed in \cite{EiL18}.

The efficiency of dynamical low-rank approximation has been demonstrated in several applications, including kinetic theory \cite{EiL18,einkemmer2019quasi,PeMF20,PeM20,einkemmer2020low,EiHY21,EiJ21,ceruti2021rank,kusch2021stability,ding2019dynamical}. Two main challenges of DLRA in the context of kinetic theory and radiation transport specifically are the preservation of mass as well as capturing the asymptotic limit. Methods to guarantee mass conservation include re-scaling strategies \cite{PeMF20}, a high-order low-order (HOLO) decomposition \cite{PeM20} and the incorporation of certain basis functions in the tangent space of low-rank functions \cite{EiJ21}. An asymptotic preserving scheme for dynamical low-rank approximation has been proposed in \cite{ding2019dynamical}. The key ingredients of this scheme are the ordering of low-rank updates and the choice of an implicit time discretization. A further method uses a HOLO scheme to guarantee preserving the asymptotic limit \cite{EiHY21}. Even though radiation therapy applications do not exhibit sufficiently strong scattering to fall into the diffusive regime, stiff scattering terms remain a challenge. While implicit time discretizations guarantee a stable treatment of such terms, their significantly increased computational costs pose difficulties. 

The efficiency of DLRA highly depends on the rank required to capture important solution characteristics. Choosing this rank sufficiently high to guarantee a satisfactory solution quality while at best maintaining low computational costs requires a great amount of intuition. Furthermore, a fixed choice of the rank does not capture the time evolution of the solution complexity. Rank adaptive DLRA integrators have for example been proposed in \cite{DeRV20,yang2020time,ceruti2021rank}.

This work presents a dynamical low-rank approximation for radiation therapy. To employ the DLRA framework in this setting, we formulate the energy dependency of the continuous slowing down equation as a pseudo-time. Dynamical low-rank approximation is then used to update the low-rank factors of the solution in energy. An efficient choice of the rank for every energy is provided through the rank adaptive integrator of \cite{ceruti2021rank}.
Further novelties of this work are:
\begin{enumerate}
\item \textit{A stable and efficient time discretization of stiff scattering terms.} Following \cite{ostermann2018convergence,kusch2021stability}, stiff scattering terms are split from the radiation transport equation. The unconventional integrator is used to time update the streaming part and the matrix projector-splitting integrator updates the scattering part. According to \cite{kusch2021stability}, the scattering part only requires an update of a single low-rank factor. By using an implicit time discretization on this part of the integrator and explicit updates on the remainder, we significantly reduce computational costs while allowing for a less restrictive CFL condition.
\item \textit{A first-collision source method to reduce the rank and impose boundary conditions.} Boundary conditions in radiation therapy are often Dirichlet conditions of uncollided particles traveling into a single direction. To efficiently incorporate this information into our solution ansatz, we perform a collided-uncollided split, see e.g. \cite{adams2002fast,hauck2013collision}. Here, collided and uncollided particles are treated in two separate equations. For the uncollided particles, an S$_N$ method with a directed quadrature set resolving only the small number of relevant directions can be used. The collided part of the solution is represented through dynamical low-rank approximation. This strips away highly peaked particle distributions in the low-rank approximation, thereby potentially reducing the rank. Furthermore, since the density of uncollided particles is zero at the boundary, imposing boundary conditions becomes straightforward.
\item \textit{A multilevel dynamical low-rank approximation.} The collided-uncollided split can be extended to $L$-collided splits. This can be interpreted as writing the solution as a telescoping sum. Expecting a reduced rank in every of these telescoping updates, individual dynamical low-rank approximations are derived for every term.
\end{enumerate}

This paper is structured as follows: Sections~\ref{sec:backgrounds} and \ref{sec:backgroundsDLRA} give an overview on methods used in this work to point to existing work and to fix notation. A general background to the used radiation transport methods, especially the continuous slowing down equation and numerical methods to solve it is given in Section~\ref{sec:backgrounds}. Section~\ref{sec:backgroundsDLRA} provides a recap of dynamical low-rank approximation as well as robust integrators for the DLRA evolution equations. The main method of this work is presented in Section~\ref{sec:mainMethod}. In Section~\ref{sec:L2Stability}, we derive a CFL condition which ensures stability and extend the results to the rank adaptive unconventional integrator in Section~\ref{sec:rankAdapt}.

\section{Recap: Mesoscopic transport models in radiation therapy}\label{sec:backgrounds}
\subsection{Continuous slowing down equation}
In the following, we discuss kinetic transport in the field of radiation treatment planning as well as related numerical methods used in this work. The task of computational mathematics in radiation therapy is to predict the transport of radiation particles in cancer patients. An accurate model is provided by the linear Boltzmann equation, which describes the dynamics of the particle density $\psi$ on a mesoscopic level \cite{frank2007fast}. The continuous slowing down (CSD) approximation \cite{larsen1997electron} to the linear Boltzmann equation reads
\begin{subequations}\label{eq:CSD1}
\begin{align}
    \mathbf{\Omega}\cdot\nabla_{\mathbf{x}}\psi(E,\mathbf{x},\mathbf{\Omega})+\rho(\mathbf{x})\Sigma_t(E)\psi(E,\mathbf{x},\mathbf{\Omega}) = &\int_{\mathbb{S}^2}\rho(\mathbf{x})\Sigma_s(E,\mathbf{\Omega}\cdot\mathbf{\Omega}')\psi(E,\mathbf{x},\mathbf{\Omega}')\,d\mathbf{\Omega}'\nonumber\\
    &+\partial_E\left(\rho(\mathbf{x})S_t(E)\psi(E,\mathbf{x},\mathbf{\Omega})\right)\;,\\
     \psi(E,\mathbf{x},\mathbf{\Omega}) &= \psi_{\text{BC}}(E,\mathbf{x},\mathbf{\Omega}) \quad \text{ for } \mathbf{x}\in\partial D\;,\\
     \psi(E_{\text{max}},\mathbf{x},\mathbf{\Omega}) &= \psi_{\text{max}}(\mathbf{x},\mathbf{\Omega})\;.
\end{align}
\end{subequations}
The phase space of the particle density $\psi$ consists of energy $E\in [0, E_{max}]\subset\mathbb{R}_+$, space $\mathbf{x}\in D\subset\mathbb{R}^3$ and direction of flight $\mathbf{\Omega}\in\mathbb{S}^2$. We use $S_t:\mathbb{R}_+\rightarrow\mathbb{R}_+$ to denote the stopping power, which describes the rate at which particles lose energy. The tissue density of the patient is $\rho:D\rightarrow\mathbb{R}_+$. Material cross sections $\Sigma_t$ and $\Sigma_s$ describe scattering and absorption interactions of particles with tissue. Stopping power and material cross-sections are given from physical databases \cite{gregoire2011state}.
The quantity of interest is the dose absorbed by the tissue, which can be determined from
\begin{align}
    D(\mathbf{x})=\frac{1}{\rho(\mathbf{x})}\int_0^{\infty}\int_{\mathbb{S}^2}\rho(\mathbf{x})S_t(E)\psi(E,\mathbf{x},\mathbf{\Omega})\,d\mathbf{\Omega} dE\;.
\end{align}

Since particles are assumed to lose energy continuously, the energy can be interpreted as a pseudo-time $t$. By performing a transformation of cross-sections and the particle density, cf. \cite{berthon2011numerical}, the CSD equation \eqref{eq:CSD1} becomes 
\begin{align}\label{eq:CSD2}
\partial_t \widetilde \psi = -\mathbf{\Omega}\cdot\nabla_{\mathbf{x}} \frac{\widetilde{\psi}}{\rho}-\widetilde\Sigma_t\widetilde{\psi} + \int_{\mathbb{S}^2}\widetilde\Sigma_s(t,\mathbf{\Omega}\cdot\mathbf{\Omega}')\widetilde{\psi}(t,\mathbf{x},\mathbf{\Omega}')\,d\mathbf{\Omega}' := R(t,\widetilde{\psi})\;.
\end{align} 
We omit phase space dependencies for ease of presentation. The pseudo-time $t$ is defined as
\begin{align}\label{eq:TildeE}
    t(E) := \int_0^{E_{\text{max}}} \frac{1}{S(E')}\,dE'-\int_0^{E} \frac{1}{S(E')}\,dE'\;.
\end{align}
A tilde is used to denote the transformation of particle densities and cross sections, given by $\widetilde\Sigma_t(t) =\Sigma_t(E(t))$, $\widetilde\Sigma_s(t,\mathbf{\Omega}\cdot\mathbf{\Omega}') =\Sigma_s(E(t),\mathbf{\Omega}\cdot\mathbf{\Omega}')$ and
\begin{align}
    \widetilde{\psi}(t,\mathbf{x},\mathbf{\Omega}):= S(E(t))\rho(\mathbf{x})\psi(E(t),\mathbf{x},\mathbf{\Omega})\;.
\end{align}
To simplify our presentation, we will from now on omit the tilde and always refer to the transformed quantities by $\psi$, $\Sigma_t$ and $\Sigma_s$. Furthermore, the in-scattering operator will be denoted as $\mathcal{S}$, i.e.,
\begin{align}
(\mathcal{S}\psi)(t,\mathbf{x},\mathbf{\Omega}):= \int_{\mathbb{S}^2}\Sigma_s(t,\mathbf{\Omega}\cdot\mathbf{\Omega}')\psi(t,\mathbf{x},\mathbf{\Omega}')\,d\mathbf{\Omega}'\;.
\end{align} 
\subsection{Collision source method}
First collision source methods split the solution to the radiation transport equation \eqref{eq:CSD2} into a collided and uncollided part, see e.g. \cite{alcouffe1990first,hauck2013collision}. Let us write $\psi(t,\mathbf{x},\mathbf{\Omega}) = \psi_u(t,\mathbf{x},\mathbf{\Omega})+\psi_c(t,\mathbf{x},\mathbf{\Omega})$, where $\psi_u$ represents uncollided and $\psi_c$ represents collided particles. Then, the radiation transport equation \eqref{eq:CSD2} can be split into 
\begin{subequations}\label{eq:CSDSplit}
\begin{align}
\partial_t \psi_u &= -\mathbf{\Omega}\cdot\nabla_{\mathbf{x}} \frac{\psi_u}{\rho}-\Sigma_t\psi_u := R_u(t,\psi_u)\;, \label{eq:CSDSplitUnc}\\
\partial_t \psi_c &= -\mathbf{\Omega}\cdot\nabla_{\mathbf{x}} \frac{\psi_c}{\rho}-\Sigma_t\psi_c + \mathcal{S}\left(\psi_u+\psi_c\right) := R_c(t,\psi_u,\psi_c)\;.\label{eq:CSDSplitCol}
\end{align} 
\end{subequations}
The first equation describes the dynamics of uncollided particles. Since collided particles cannot generate or deplete uncollided particles, \eqref{eq:CSDSplitUnc} solely depends on $\psi_u$. Furthermore, as uncollided particles do not generate inscattering, only the outscattering term $-\Sigma_t\psi_u$ describes interactions with the background material. The dynamics of collided particles is described by \eqref{eq:CSDSplitCol} and includes inscattering from uncollided particles. This methodology of representing the solution in terms of collided and uncollided particles can be developed further: Denoting the particles that have collided $\ell = 0,\cdots,L$ times as $\psi_{\ell}$ and particles that have collided more than $L$ times as $\psi_c$, we have $\psi = \psi_0 + \psi_1 + \cdots + \psi_M + \psi_c$. Then, we obtain the equations
\begin{subequations}\label{eq:CSDSplitMCollided}
\begin{align}
\partial_t \psi_0 &= -\mathbf{\Omega}\cdot\nabla_{\mathbf{x}} \frac{\psi_0}{\rho}-\Sigma_t\psi_0 := R_0(t,\psi_0)\;, \label{eq:CSDSplitMUnc}\\
\partial_t \psi_{\ell} &= -\mathbf{\Omega}\cdot\nabla_{\mathbf{x}} \frac{\psi_{\ell}}{\rho}-\Sigma_t\psi_{\ell} + \mathcal{S}\psi_{\ell-1} := R_1(t,\psi_{\ell-1},\psi_{\ell})\;, \qquad\text{ for }\ell = 1,\cdots,L, \label{eq:CSDSplitM}\\
\partial_t \psi_c &= -\mathbf{\Omega}\cdot\nabla_{\mathbf{x}} \frac{\psi_c}{\rho}-\Sigma_t\psi_c + \mathcal{S}\left(\psi_{L}+\psi_c\right) := R_c(t,\psi_M,\psi_c)\;.\label{eq:CSDSplitMCol}
\end{align} 
\end{subequations}
\subsection{Angular discretization}
To allow for numerical approximations of solutions to the presented equations, time, space and angle need to be discretized. Numerical artifacts that numerical solutions exhibit crucially depend on the angular discretization. Deterministic discretizations can be classified into nodal and modal methods. A conventional nodal method is the \textit{discrete ordinates} (S$_N$) \cite{lewis1984computational} method, which evolves the solution on a chosen angular quadrature set. A conventional modal discretization technique is the \textit{spherical harmonics} (P$_N$) method \cite{case1967linear} which spans the solution in terms of spherical harmonics basis functions. For degree $\ell$ and order $k$, the spherical harmonics for an angle $\mathbf{\Omega} = (\sqrt{1-\mu^2}\cos{\varphi},\sqrt{1-\mu^2}\sin{\varphi},\mu)^T$ are defined as
\begin{align*}
Y_{\ell}^k(\mathbf{\Omega}) = \sqrt{\frac{2\ell +1}{4\pi}\frac{(\ell-k)!}{(\ell+k)!}}\ e^{ik\varphi}P_{\ell}^k(\mu)\;,
\end{align*}
where $P_{\ell}^k$ are the associated Legendre polynomials. In this work, we use the real spherical harmonics
\begin{align*}
    m_{\ell}^k = 
    \begin{cases}
        \frac{(-1)^k}{\sqrt{2}}\left( Y_{\ell}^k + (-1)^k Y_{\ell}^{-k} \right), & k > 0\;, \\
        Y_{\ell}^0 & k = 0 \;, \\
        -\frac{(-1)^k i}{\sqrt{2}}\left( Y_{\ell}^{-k} - (-1)^k Y_{\ell}^{k} \right), & k < 0\;.
    \end{cases}
\end{align*}
Let us collect all basis functions up to degree $N$ in a vector
\begin{align*}
    \mathbf m = (m_0^0, m_1^{-1}, m_1^{0}, m_1^{1},\cdots, m_N^{N})^T\in\mathbb{R}^{(N+1)^2}
\end{align*}
and choose the modal approximation $\psi(t,\mathbf{x},\mathbf{\Omega}) \approx \mathbf{u}(t,\mathbf x)^T\mathbf{m}(\mathbf{\Omega})$. The coefficient (or moment) vector $\mathbf{u}$ has $m:=(N+1)^2$ entries. Then, the P$_N$ equations for $\mathbf{x} = (x,y,z)^T$ read
\begin{align*}
    \partial_t \mathbf u (t,\mathbf x) =-\mathbf A\cdot\nabla_{\mathbf{x}} \frac{\mathbf u(t,\mathbf x)}{\rho(\mathbf x)}-\Sigma_t(t)\mathbf u (t,\mathbf x)+\bm{\Sigma}\mathbf u (t,\mathbf x),
\end{align*}
where $\mathbf A\cdot\nabla_{\mathbf{x}} := \mathbf A_1\partial_{x} + \mathbf A_2\partial_y+ \mathbf A_3\partial_z$ with $\mathbf A_i := \int_{\mathbb{S}^2}\mathbf m\mathbf m^T \Omega_i \, d\bm\Omega$. The diagonal in-scattering matrix $\bm{\Sigma}$ has entries $\Sigma_{kk}(t) = 2\pi\int_{[-1,1]}P_{k}(\mu)\Sigma_s(t,\mu)\,d\mu$. Note that $\Sigma_{t}(t) = \Sigma_{11}(t) > 0$ and 
\begin{align}\label{eq:SigmaKK}
    \vert \Sigma_{kk}(t) \vert\leq 2\pi\int_{[-1,1]}\vert P_{k}(\mu)\vert\cdot\vert\Sigma_s(t,\mu)\vert\,d\mu \leq 2\pi\int_{[-1,1]}\vert\Sigma_s(t,\mu)\vert\,d\mu= \Sigma_{11}(t).
\end{align}

\subsection{Spatial discretization}
Several benchmarks in radiation transport assume a two-dimensional spatial domain, i.e., the spatial variable becomes $\mathbf x = (x,y)^T$. In this setting, we focus on the discretization of the P$_N$ equations, since these will become relevant in the proposed DLRA method. Then, we have
\begin{align}\label{eq:PNSystem2D}
\partial_t \mathbf u(t,\mathbf x) = -\mathbf{A}_{x}\partial_{x}\mathbf u(t,\mathbf x)-\mathbf{A}_y\partial_y\mathbf u(t,\mathbf x) + \mathbf G \mathbf u(t,\mathbf x)\;.
\end{align}
A finite volume discretization splits the spatial domain $D$ into $N_x\cdot N_y$ cells. In case of a structured quadrangle grid, the $x$ and $y$ domains are discretized into uniform  one-dimensional grids $x_{1}\leq x_2 \leq \cdots \leq x_{N_x+1}$ and $y_{1}\leq y_2 \leq \cdots \leq y_{N_y+1}$ with grid size $\Delta x$ and $\Delta y$ respectively. Then, the cell of index $(i,j)$ is defined on $I_{ij}:=[x_i,x_{i+1}]\times [y_j,y_{j+1}]$ on which the numerical solution is chosen as
\begin{align*}
\mathbf u_{ij}(t) \simeq \frac1{\Delta x \Delta y}\int_{I_{ij}} \mathbf u(t,\mathbf x) \,d\mathbf x\;.
\end{align*}
In the same manner we compute the patient density $\rho_{ij}$. To simplify the presentation of the spatial discretization, let us collect $\mathbf u_{ij}(t) = \left(u_{ijk}(t)\right)_{k=1}^m\in\mathbb{R}^{m}$ for $i = 1,\cdots,N_x$ and $j = 1,\cdots, N_y$ into a matrix $\textbf u(t)\in\mathbb{R}^{n_x\times m}$, where $n_x := N_x \cdot N_y$. For this, we define the function $\text{idx}:\mathbb{N}\times\mathbb{N}\rightarrow \mathbb{N}$ as $\text{idx}(i,j) = (i-1)\cdot N_x + j$. Then, the entries of $\textbf u(t)$ are defined as $u_{\text{idx}(i,j),k}(t) = u_{ijk}(t)$.

With this notation at hand, we can define a finite volume scheme for \eqref{eq:PNSystem2D} in compact notation. Let us define the sparse diffusion stencil matrices $\mathbf L_{x,y}^{(1)}\in \mathbb{R}^{n_x \times n_x}$ as
\begin{align*}
&L^{(1)}_{x,\text{idx}(i,j),\text{idx}(i,j)}= \frac{1}{\rho_{ij}\Delta x}\;, \quad L^{(1)}_{x,\text{idx}(i,j),\text{idx}(i\pm 1,j)}= -\frac{1}{2\rho_{i\pm 1,j}\Delta x}\;, \\
&L^{(1)}_{y,\text{idx}(i,j),\text{idx}(i,j)}= \frac{1}{\rho_{ij}\Delta y}\;, \quad L^{(1)}_{y,\text{idx}(i,j),\text{idx}(i,j\pm 1)}= -\frac{1}{2\rho_{i,j\pm 1}\Delta y}\;,
\end{align*}
as well as the sparse advection stencil matrices $\mathbf L_{x,y}^{(2)}\in \mathbb{R}^{n_x \times n_x}$ as
\begin{align*}
L^{(2)}_{x,\text{idx}(i,j),\text{idx}(i\pm 1,j)}= \frac{\pm 1}{2\rho_{i\pm 1,j}\Delta x}\;,\quad L^{(2)}_{y,\text{idx}(i,j),\text{idx}(i,j\pm 1)}= \frac{\pm 1}{2\rho_{i,j\pm 1}\Delta y}\;.
\end{align*}
Furthermore with $\mathbf A_{x,y} = \mathbf V_{x,y} \mathbf\Lambda_{x,y} \mathbf V_{x,y}^{T}$, the Roe matrices $\vert \mathbf A_{x} \vert$, $\vert \mathbf A_{y} \vert\in\mathbb{R}^{m\times m}$ are defined as 
\begin{align*}
\vert \mathbf A_{x} \vert := \mathbf V_{x} \vert \mathbf\Lambda_{x} \vert \mathbf V_{x}^{T}\;, \text{ and } \quad \vert \mathbf V_{y} \vert := \mathbf V_{y} \vert \mathbf\Lambda_{y} \vert \mathbf V_{y}^{T}.
\end{align*}
Here, $\mathbf V_{x,y}$ collects the orthonormal eigenvectors of the symmetric matrices $\mathbf A_{x,y}$ and $\mathbf\Lambda_{x,y} = \text{diag}(\lambda_{1}^{x,y},\cdots \lambda_{m}^{x,y})$ are the corresponding real eigenvalues. Then, the semi-discrete finite volume update becomes a huge matrix differential equation of the form
\begin{align*}
\dot{\mathbf u}(t) = \mathbf F(\mathbf u(t)) + \mathbf G(t, \mathbf u(t),\mathbf u(t)),
\end{align*}
where
\begin{align*}
\mathbf F(\mathbf u) :=& \ \mathbf L_x^{(2)} \mathbf u \mathbf A_{x}^T + \mathbf L_y^{(2)} \mathbf u \mathbf A_{y}^T + \mathbf L_x^{(1)} \mathbf u \vert\mathbf A_{x}\vert^T+ \mathbf L_y^{(1)} \mathbf u \vert\mathbf A_{y}\vert^T,\\
\mathbf G(t, \mathbf v,\mathbf u) :=& \ -\Sigma_t(t)\mathbf u + \mathbf v \bm{\Sigma}(t)\;.
\end{align*}
The costs of evaluating the right-hand side are $C_{\text{P}_N} \lesssim n_x \cdot m$, when accounting for the sparsity of all stencil, flux and Roe matrices which leads to linear costs in $n_x$ and $m$ of matrix products. %\comment{So far dense flux and Roe matrices in code. Double check sparsity of Roe matrix.}

\section{Recap: Dynamical low-rank approximation}\label{sec:backgroundsDLRA}
\subsection{Main framework}
This section gives a brief overview on dynamical low-rank approximation \cite{KochLubich07} for matrix differential equations $\dot{\mathbf u}(t) = \mathbf F(t, \mathbf u(t))$. Dynamical low-rank approximation represent and evolves the solution on a manifold of rank $r$ matrices, which we denote by $\mathcal{M}_r$. A low-rank representation is given by the SVD-like factorization
\begin{align}\label{eq:rankrsol}
\mathbf u(t)\approx\mathbf{X}(t)\mathbf{S}(t)\mathbf{W}(t)^T,
\end{align}
where $\mathbf{X}\in\mathbb{R}^{n_x\times r}$ and $\mathbf{W}\in\mathbb{R}^{m\times r}$ are basis matrices with orthonormal column vectors and $\mathbf{S}\in\mathbb{R}^{r\times r}$ is a dense coefficient matrix. Time evolution equations for the factors can be defined by imposing
\begin{align}\label{eq:DLRproblem}
\dot{\mathbf u}(t)\in T_{ \mathbf u(t)}\mathcal{M}_r \qquad \text{such that} \qquad \left\Vert \dot{\mathbf u}(t)-\mathbf{F}(t,\mathbf u(t)) \right\Vert = \text{min}\;.
\end{align}
We use $T_{\mathbf u(t)}\mathcal{M}_r$ to denote the tangent space of $\mathcal{M}_r$ at $\mathbf u(t)$, i.e., the solution should remain of rank $r$ over time while minimizing the defect. These conditions yield a time evolution equation for the low-rank solution \cite[Lemma~4.1]{KochLubich07}, which reads
\begin{align}\label{eq:lowRankProjector}
\dot{\mathbf u}(t) = \mathbf{P}(\mathbf u(t))\mathbf{F}(t,\mathbf u(t)).
\end{align} 
The operator $\mathbf{P}$ is the orthogonal projection onto the tangent space, given by
\begin{align*}
\mathbf P\mathbf g = \mathbf{X}\mathbf{X}^T \mathbf g - \mathbf{X}\mathbf{X}^T \mathbf g \mathbf{W}\mathbf{W}^T + \mathbf g \mathbf{W}\mathbf{W}^T.
\end{align*}
Evolution equations can then be derived for the factors $\mathbf{X}$, $\mathbf{S}$ and $\mathbf{W}$, see \cite{KochLubich07}. However, the resulting equations depend on the inverse of the commonly ill conditioned coefficient matrix, which substantially limits the permitted step size \cite{KieriLubichWalach}. 

\subsection{Robust fixed rank integrators}
Two robust integrators have been proposed for the evolution equation \eqref{eq:lowRankProjector}. First, the matrix projector--splitting integrator \cite{LubichOseledets} splits \eqref{eq:lowRankProjector} by a Lie-Trotter splitting technique, yielding
\begin{subequations}\label{eq:projectorSplitEq}
\begin{alignat}{2}
\dot{\mathbf u}_{\RomanNumeralCaps{1}}(t) &= \mathbf{F}(\mathbf u_{\RomanNumeralCaps{1}}(t))\mathbf{W}\mathbf{W}^T, \quad && \mathbf u_{\RomanNumeralCaps{1}}(t_0) = \mathbf u(t_0), \label{eq:DLR1}\\ 
\dot{\mathbf u}_{\RomanNumeralCaps{2}}(t) &= -\mathbf{X}\mathbf{X}^T\mathbf{F}(\mathbf u_{\RomanNumeralCaps{2}}(t))\mathbf{W}\mathbf{W}^T, \quad && \mathbf u_{\RomanNumeralCaps{2}}(t_0) = \mathbf u_{\RomanNumeralCaps{1}}(t_1),\label{eq:DLR2}\\ 
\dot{\mathbf u}_{\RomanNumeralCaps{3}}(t) &= \mathbf{X}\mathbf{X}^T \mathbf{F}(\mathbf u_{\RomanNumeralCaps{3}}(t)) , \quad && \mathbf u_{\RomanNumeralCaps{3}}(t_0) = \mathbf u_{\RomanNumeralCaps{2}}(t_1).\label{eq:DLR3}
\end{alignat}
\end{subequations}
The resulting consecutive movement in the low-rank manifold ensures robustness irrespective of singular
values and thereby allows for increased step sizes \cite{LubichOseledets}. Defining the decompositions $\mathbf u_{\RomanNumeralCaps{1}} = \mathbf{K}\mathbf{W}^T$ as well as $\mathbf u_{\RomanNumeralCaps{3}} = \mathbf{X}\mathbf{L}$ gives the \textit{matrix projector-splitting integrator}, which updates the low-rank factors $\mathbf X^{0} = \mathbf X(t_0)$, $\mathbf W^{0} = \mathbf W(t_0)$ and $\mathbf S^{0} = \mathbf S(t_0)$ to time $t_1 = t_0 + \Delta t$:
\begin{enumerate}
    \item \textbf{$K$-step}: Update $\mathbf X^{0}$ to $\mathbf X^{1}$ and $\mathbf S^0$ to $\mathbf{\widetilde S}^0$ via
\begin{align}
\dot{\mathbf K}(t) &= \mathbf{F}(\mathbf{K}(t)\mathbf{W}^{0,T})\mathbf{W}^0, \qquad \mathbf K(t_0) = \mathbf{X}^0\mathbf{S}^0.\label{eq:KStepSemiDiscrete}
\end{align}
Determine $\mathbf X^1$ and $\mathbf{\widetilde S}^0$ with $\mathbf K(t_1) = \mathbf X^1 \mathbf{\widetilde S}^0$ by performing a QR decomposition.
\item \textbf{$S$-step}: Update $\mathbf{\widetilde S^0}$ to $\mathbf{\widetilde S^1}$ via
\begin{align}
\dot{\mathbf{\widetilde S}}(t) = -\mathbf{X}^{1,T}\mathbf{F}(\mathbf{X}^{1}\mathbf{\widetilde S}(t)\mathbf{W}^{0,T})\mathbf{W}^0, \qquad \mathbf{\widetilde S}(t_0) = \mathbf{\widetilde S}^0\label{eq:SStepSemiDiscrete}
\end{align}
and set $\mathbf{\widetilde S}^1 = \mathbf{\widetilde S}(t_1)$.
\item \textbf{$L$-step}: Update $\mathbf W^0$ to $\mathbf W^1$ and $\mathbf{\widetilde S}^1$ to $\mathbf S^1$ via
\begin{align}
\dot{\mathbf L}(t) &= \mathbf{X}^{1,T}\mathbf{F}(\mathbf{X}^1\mathbf{L}(t)), \qquad \mathbf L(t_0) = \mathbf{\widetilde S}^1 \mathbf{W}^{0,T}.\label{eq:LStepSemiDiscrete}
\end{align}
Determine $\mathbf W^1$ and $\mathbf S^1$ with $\mathbf L(t_1) = \mathbf S^1 \mathbf W^{1,T}$ by performing a QR decomposition.
\end{enumerate}
Then, the time updated solution is $\mathbf{u}(t_1) = \mathbf{X}^1\mathbf{S}^1\mathbf{W}^{1,T}$. It has been noted in \cite{kusch2021stability} that when the flux function takes the form $\mathbf F (t,\mathbf u(t)) = \mathbf u(t)\mathbf G(t)$, the $K$ and $S$-steps cancel out and only the $L$-step determines the dynamics.

The second robust integrator is the \textit{unconventional integrator}, which has recently been introduced in~\cite{CeL21}. This integrator first performs basis updates of $\textbf X$ and $\textbf W$ in parallel and then updates the coefficient matrix $\textbf S$  by a Galerkin step. This integrator shares the robustness properties of the matrix projector--splitting integrator \cite{CeL21}. It takes the form
\begin{enumerate}
    \item \textbf{$K$-step}: Update $\mathbf X^{0}$ to $\mathbf X^{1}$ via
    \begin{align}
        \dot{\mathbf K}(t) &= \mathbf{F}(\mathbf{K}(t)\mathbf{W}^{0,T})\mathbf{W}^0, \qquad \mathbf K(t_0) = \mathbf{X}^0\mathbf{S}^0.\label{eq:KStepSemiDiscreteUI}
    \end{align}
Determine $\mathbf X^1$ with $\mathbf K(t_1) = \mathbf X^1 \mathbf R$ and store $\mathbf M = \mathbf X^{1,T}\mathbf X^0$.
\item \textbf{$L$-step}: Update $\mathbf W^0$ to $\mathbf W^1$ via
\begin{align}
\dot{\mathbf L}(t) &= \mathbf{X}^{0,T}\mathbf{F}(\mathbf{X}^0\mathbf{L}(t)), \qquad \mathbf L(t_0) = \mathbf{S}^0 \mathbf{W}^{0,T}.\label{eq:LStepSemiDiscreteUI}
\end{align}
Determine $\mathbf W^1$ with $\mathbf L(t_1) = \mathbf W^1\mathbf{\widetilde R}$ and store $\mathbf N = \mathbf W^{1,T} \mathbf W^0$.
\item \textbf{$S$-step}: Update $\mathbf S^0$ to $\mathbf S^1$ via
\begin{align}
\dot{\mathbf S}(t) = \mathbf{X}^{1,T}\mathbf{F}(\mathbf{X}^{1}\mathbf{S}(t)\mathbf{W}^{1,T})\mathbf{W}^1, \qquad \mathbf S(t_0) &= \mathbf M\mathbf S^0 \mathbf N^T\label{eq:SStepSemiDiscreteUI}
\end{align}
and set $\mathbf S^1 = \mathbf S(t_1)$.
\end{enumerate}

\subsection{Rank adaptive unconventional integrator}
The unconventional integrator has recently been extended to allow for rank adaptivity \cite{ceruti2021rank}. I.e., given a tolerance parameter $\vartheta$, the integrator adapts the rank in time. 

Starting from time $t_0$ where the solution has rank $r_0$, the integrator gives the factored solution at time $t_1$ with rank $r_1 \leq 2r_0$. In the following, we use $r = r_0$ and use hats to denote matrices of rank $2r$. Then the rank adaptive integrator reads
\begin{enumerate}
    \item \textbf{$K$-step}: Update $\mathbf X^{0}\in\mathbb{R}^{n_x\times r}$ to $\mathbf{\widehat X}^{1}\in\mathbb{R}^{n_x\times 2r}$ via
    \begin{align}
        \dot{\mathbf K}(t) &= \mathbf{F}(\mathbf{K}(t)\mathbf{W}^{0,T})\mathbf{W}^0, \qquad \mathbf K(t_0) = \mathbf{X}^0\mathbf{S}^0.\label{eq:KStepSemiDiscreteUIRankadapt}
    \end{align}
Determine $\mathbf{\widehat X}^1$ with $[\mathbf K(t_1), \mathbf{X}^0]  = \mathbf{\widehat X}^1 \mathbf R$ and store $\mathbf{\widehat M} = \mathbf{\widehat X}^{1,T}\mathbf X^0\in\mathbb{R}^{2r\times r}$.
\item \textbf{$L$-step}: Update $\mathbf W^0\in\mathbb{R}^{m\times r}$ to $\mathbf{\widehat W}^1\in\mathbb{R}^{m\times 2r}$ via
\begin{align}
\dot{\mathbf L}(t) &= \mathbf{X}^{0,T}\mathbf{F}(\mathbf{X}^0\mathbf{L}(t)), \qquad \mathbf L(t_0) = \mathbf{S}^0 \mathbf{W}^{0,T}.\label{eq:LStepSemiDiscreteUIRankadapt}
\end{align}
Determine $\mathbf{\widehat W}^1$ with $[\mathbf L(t_1),\mathbf W^0] = \mathbf{\widehat W}^1\mathbf{\widetilde R}$ and store $\mathbf{\widehat N} = \mathbf{\widehat W}^{1,T} \mathbf W^0$.
\item \textbf{$S$-step}: Update $\mathbf S^0\in\mathbb{R}^{r\times r}$ to $\mathbf{\widehat S}^1\in\mathbb{R}^{2r\times 2r}$ via
\begin{align}
\dot{\mathbf{\widehat S}}(t) = \mathbf{\widehat X}^{1,T}\mathbf{F}(\mathbf{\widehat X}^1\mathbf{\widehat S}(t)\mathbf{\widehat W}^{1,T})\mathbf{\widehat W}^{1}, \qquad \mathbf{\widehat S}(t_0) &= \mathbf{\widehat M}\mathbf S^0 \mathbf{\widehat N}^T\label{eq:SStepSemiDiscreteUIRankadapt}
\end{align}
and set $\mathbf{\widehat S}^1 = \mathbf{\widehat S}(t_1)$.
\item \textbf{Truncation}:
		Determine the SVD $\; \bm{\widehat{S}}^{1}= \bm{\widehat{P}}  \bm{\widehat{\Sigma}} \bm{\widehat{Q}}^\top$ where $\bm{\widehat{\Sigma}}=\text{diag}(\sigma_j)$. For a given tolerance~$\vartheta$, choose the new rank $r_{1}\le 2r$ such that
		$$
		\biggl(\ \sum_{j=r_{1}+1}^{2r} \sigma_j^2 \biggr)^{1/2} \le \vartheta.
		$$
		Compute the new factors as follows:
Let $\bm{S}^1$ be the $r_1\times r_1$ diagonal matrix with the $r_1$ largest singular values and let $\bm{P}_1\in \mathbb{R}^{2r\times r_1}$ and $\bm{Q}_1\in \mathbb{R}^{2r\times r_1}$ contain the first $r_1$ columns of $\bm{\widehat{P}}$ and $\bm{\widehat{Q}}$, respectively. Finally, set $\bm{X}^1 = \bm{\widehat{X}}^1 \bm{P}_1\in \mathbb{R}^{m\times r_1}$ and
$\bm{W}^1 = \bm{\widehat{W}}^1 \bm{Q}_1 \in \mathbb{R}^{n\times r_1}$.
\end{enumerate}

\section{A robust collision source method for dynamical low-rank approximation}\label{sec:mainMethod}
In this section we present the main method, which aims at providing an efficient and robust alternative to conventional strategies. Key ingredients for the construction are 1) a collision source method to define a splitting of the original equation, 2) a further splitting of collision terms which are treated implicitly, 3) computing individual DLRA updates by using the unconventional integrator for collided particles.
\subsection{Collided-uncollided split}
We start by deriving moment equations for the collided particles in the collision source method and use an S$_N$ method for uncollided particles. Without going into detail, we denote the S$_N$ solution of the uncollided particles as $\bm \psi$ and the right-hand side of a time continuous S$_N$ method for streaming (i.e., particles move without interacting with tissue) as $\mathbf F_S(\bm{\psi})$. Then the evolution equations for the semi-discrete solution become
\begin{subequations}\label{eq:CSDSplitMCollidedPN}
\begin{align}
\bm{\dot{\psi}}(t) &= \mathbf F_S(\bm{\psi}(t))-\Sigma_t(t)\bm{\psi}(t)\;, \label{eq:CSDSplitMUncPN}\\
\mathbf{\dot{u}}_{1}(t) &= \mathbf F(\mathbf u_{1}(t))-\Sigma_t(t)\mathbf u_{1}(t) + \bm{\psi}(t)\mathbf{T}_{M}\bm{\Sigma}(t)\;, \label{eq:CSDSplitM1}\\
\mathbf{\dot{u}}_{\ell}(t) &= \mathbf F(\mathbf u_{\ell}(t)) -\Sigma_t(t)\mathbf u_{\ell}(t) + \mathbf u_{\ell-1}(t)\bm{\Sigma}(t)\;,\qquad\text{ for }\ell = 2,\cdots,L, \label{eq:CSDSplitM}\\
\mathbf{\dot{u}}_{c}(t) &= \mathbf F(\mathbf u_{c}(t))-\Sigma_t(t)\mathbf u_{c} + \left(\mathbf u_{L}(t)+\mathbf u_c(t)\right)\bm{\Sigma}(t)\;,\label{eq:CSDSplitMCol}
\end{align} 
\end{subequations}
where the matrix $\mathbf{T}_{M}$ maps the nodal solution onto its moments. These $L+2$ equations can be solved consecutively. Since radiation therapy commonly investigates the effects of particle beams which enter the computational domain from the boundary, the S$_N$ equations from the uncollided particles can be solved efficiently by using a biased quadrature rule. I.e., the quadrature only encodes the small number of possible flight directions. After the first collision, particles move into all directions. To account for the increased complexity, we describe the collided solution through dynamical low-rank approximation. Following \cite{kusch2021stability}, we split streaming and scattering parts and use the matrix projector--splitting integrator to update in-scattering in equation \eqref{eq:CSDSplitMCol} as well as the unconventional integrator for the remainder. 

To simplify our presentation, we start by discussing this strategy for \eqref{eq:CSDSplitM1} and then extend it to the remaining equations. Let us first split \textit{streaming} and \textit{scattering} in \eqref{eq:CSDSplitM1}. Omitting the subscript $1$ gives
\begin{subequations}\label{eq:splitStreamScatter}
\begin{alignat}{2}
\mathbf{\dot{u}}_{\RomanNumeralCaps{1}}(t) &= \mathbf F(\mathbf u_{\RomanNumeralCaps{1}}(t))\;,\qquad && \mathbf{u}_{\RomanNumeralCaps{1}}(t_0) = \bm{u}_1(t_0)\;,\label{eq:PNStreaming} \\
\mathbf{\dot{u}}_{\RomanNumeralCaps{2}}(t) &= -\Sigma_t(t)\mathbf u_{\RomanNumeralCaps{2}}(t) + \bm{\psi}(t_1)\mathbf{T}_{M}\bm{\Sigma}(t)\;,\qquad && \mathbf{u}_{\RomanNumeralCaps{2}}(t_0) = \mathbf{u}_{\RomanNumeralCaps{1}}(t_1)\;.\label{eq:PNScattering}
\end{alignat}
\end{subequations}
The updated solution at time $t_1 = t_0 + \Delta t$ is then given as $\bm{u}_1(t_1) = \mathbf{u}_{\RomanNumeralCaps{2}}(t_1)$. Note that the splitting method introduces an error of $O(\Delta t)$, which can be reduced by high order splitting methods. We start with the derivation of the basis update and Galerkin step equations of the unconventional integrator for the streaming part \eqref{eq:PNStreaming}. I.e., we derive evolution equations for $\mathbf X_{\RomanNumeralCaps{1}}(t),\mathbf S_{\RomanNumeralCaps{1}}(t),\mathbf W_{\RomanNumeralCaps{1}}(t)$ such that $\mathbf{u}_{\RomanNumeralCaps{1}}(t)\approx\mathbf X_{\RomanNumeralCaps{1}}(t)\mathbf S_{\RomanNumeralCaps{1}}(t)\mathbf W_{\RomanNumeralCaps{1}}(t)^T$. To simplify notation let us omit Roman indices in the following.
The $K$-step equations \eqref{eq:KStepSemiDiscreteUI} read
\begin{align}\label{eq:KStreaming}
\mathbf{\dot{K}}(t) =& \ \mathbf F(\mathbf{K}(t)\mathbf{W}^{0,T})\mathbf{W}^0 \nonumber\\
=& \ \mathbf L_x^{(2)} \mathbf{K}(t)\mathbf{W}^{0,T} \mathbf A_{x}^T\mathbf{W}^0 + \mathbf L_y^{(2)} \mathbf{K}(t)\mathbf{W}^{0,T} \mathbf A_{y}^T\mathbf{W}^0\nonumber\\
& + \mathbf L_x^{(1)} \mathbf{K}(t)\mathbf{W}^{0,T} \vert\mathbf A_{x}\vert^T\mathbf{W}^0+ \mathbf L_y^{(1)} \mathbf{K}(t)\mathbf{W}^{0,T} \vert\mathbf A_{y}\vert^T\mathbf{W}^0 \nonumber\\
=& \ \mathbf L_x^{(2)} \mathbf{K}(t)\mathbf{\widehat A}_{x}^0 + \mathbf L_y^{(2)} \mathbf{K}(t)\mathbf{\widehat A}_{y}^0 + \mathbf L_x^{(1)} \mathbf{K}(t)\vert\mathbf{\widehat A}_{x}\vert^0+ \mathbf L_y^{(1)} \mathbf{K}(t)\vert\mathbf{\widehat A}_{y}\vert^0,
\end{align}
where we use $\mathbf{\widehat A}_{x,y}^0 := \mathbf{W}^{0,T} \mathbf A_{x,y}^T\mathbf{W}^0$ and $\vert\mathbf{\widehat A}_{x,y}\vert^0:=\mathbf{W}^{0,T} \vert\mathbf A_{x,y}\vert^T\mathbf{W}^0$. The numerical costs to compute these matrices are of $O(r^2\cdot m)$ and evaluating the right-hand side of the $K$-step equations has costs of $O(r^2\cdot n_x)$. To point out that the spatial basis is not yet updated by the scattering step \eqref{eq:PNScattering}, we define the superscript $\inter$, i.e., the solution is given as $\mathbf{X}^{\inter} := \mathbf{X}_{\RomanNumeralCaps{1}}(t_1)$.

The $L$-step equations \eqref{eq:LStepSemiDiscreteUI} read
\begin{align}\label{eq:LStreaming}
\mathbf{\dot{L}}(t) =& \ \mathbf{X}^{0,T}\mathbf F(\mathbf{X}^0\mathbf{L}(t))\nonumber\\
=& \ \mathbf{X}^{0,T}\mathbf L_x^{(2)} \mathbf{X}^0\mathbf{L}(t) \mathbf A_{x}^T + \mathbf{X}^{0,T}\mathbf L_y^{(2)} \mathbf{X}^0\mathbf{L}(t) \mathbf A_{y}^T \nonumber\\
&+ \mathbf{X}^{0,T}\mathbf L_x^{(1)} \mathbf{X}^0\mathbf{L}(t) \vert\mathbf A_{x}\vert^T+ \mathbf{X}^{0,T}\mathbf L_y^{(1)} \mathbf{X}^0\mathbf{L}(t) \vert\mathbf A_{y}\vert^T\nonumber\\
=& \ \mathbf{\widehat L}_x^{(2),0} \mathbf{L}(t) \mathbf A_{x}^T + \mathbf{\widehat L}_y^{(2),0}\mathbf{L}(t) \mathbf A_{y}^T + \mathbf{\widehat L}_x^{(1),0} \mathbf{L}(t) \vert\mathbf A_{x}\vert^T+ \mathbf{\widehat L}_y^{(1),0}\mathbf{L}(t) \vert\mathbf A_{y}\vert^T\;,
\end{align}
where we use $\mathbf{\widehat L}_{x,y}^{(2),0} := \mathbf{X}^{0,T}\mathbf L_{x,y}^{(2)} \mathbf{X}^0$ and $\mathbf{\widehat L}_{x,y}^{(1),0}:=\mathbf{X}^{0,T}\mathbf L_{x,y}^{(1)} \mathbf{X}^0$. The numerical costs to compute these matrices are of $O(r^2\cdot n_x)$ and evaluating the right-hand side of the $L$-step equations has costs of $O(r^2\cdot~m)$. To point out that the directional basis is not yet updated by the scattering step \eqref{eq:PNScattering}, we define $\mathbf{W}^{\inter} := \mathbf{W}_{\RomanNumeralCaps{1}}(t_1)$.

The $S$-step equations \eqref{eq:SStepSemiDiscreteUI} read
\begin{align}\label{eq:SStreaming}
\mathbf{\dot{S}}(t) =& \ \mathbf{X}^{\inter,T}\mathbf F(\mathbf{X}^{\inter}\mathbf{S}(t)\mathbf{W}^{\inter,T})\mathbf{W}^{\inter}\nonumber \\
=& \ \mathbf{X}^{\inter,T}\mathbf L_x^{(2)} \mathbf{X}^{\inter}\mathbf{S}(t)\mathbf{W}^{\inter,T} \mathbf A_{x}^T\mathbf{W}^{\inter} + \mathbf{X}^{\inter,T}\mathbf L_y^{(2)} \mathbf{X}^{\inter}\mathbf{S}(t)\mathbf{W}^{\inter,T} \mathbf A_{y}^T\mathbf{W}^{\inter} \nonumber \\
&+ \mathbf{X}^{\inter,T}\mathbf L_x^{(1)} \mathbf{X}^{\inter}\mathbf{S}(t)\mathbf{W}^{\inter,T} \vert\mathbf A_{x}\vert^T\mathbf{W}^{\inter}+ \mathbf{X}^{\inter,T}\mathbf L_y^{(1)} \mathbf{X}^{\inter}\mathbf{S}(t)\mathbf{W}^{\inter,T} \vert\mathbf A_{y}\vert^T\mathbf{W}^{\inter}\nonumber \\
=& \ \mathbf{\widehat L}_x^{(2),\inter} \mathbf{S}(t) \mathbf{\widehat A}_{x}^{\inter} + \mathbf{\widehat L}_y^{(2),\inter}\mathbf{S}(t) \mathbf{\widehat A}_{y}^{\inter} + \mathbf{\widehat L}_x^{(1),\inter} \mathbf{S}(t) \vert\mathbf{\widehat A}_{x}\vert^{\inter}+ \mathbf{\widehat L}_y^{(1),\inter}\mathbf{S}(t) \vert\mathbf{\widehat A}_{y}\vert^{\inter}\;.
\end{align}
The numerical costs to compute update matrices and to evaluate the right-hand side of the $S$-step equations are of $O(r^2\cdot (n_x+m))$. To point out, the the coefficient matrix is not yet updated by the scattering step \eqref{eq:PNScattering}, we define $\mathbf{S}^{\inter} := \mathbf{S}_{\RomanNumeralCaps{1}}(t_1)$.

%\mathbf{\dot{u}}_{\RomanNumeralCaps{2}}(t) &= -\Sigma_t(t)\mathbf u_{\RomanNumeralCaps{2}}(t) + \bm{\psi}(t_1)\mathbf{T}_{M}\bm{\Sigma}(t)
In a second step, we use the updated factors as initial condition, i.e., $\mathbf X_{\RomanNumeralCaps{2}}(t_0) = \mathbf X^{\inter}$, $\mathbf S_{\RomanNumeralCaps{2}}(t_0) = \mathbf S^{\inter}$ and $\mathbf W_{\RomanNumeralCaps{2}}(t_0) = \mathbf W^{\inter}$. Again Roman numbers are omitting in the following. However, we do include a subscript $1$ to denote that we are solving for the factors of $\mathbf{u}_1$. For the scattering step \eqref{eq:PNScattering}, determining the $K$, $L$ and $S$-steps is straightforward and leads to
\begin{subequations}\label{eq:u1ScatterKLS}
\begin{align}
    \mathbf{\dot{K}}_1(t) =& \ -\Sigma_t(t)\mathbf{K}_1(t) + \bm{\psi}(t_1)\mathbf{T}_{M}\bm{\Sigma}(t)\mathbf{W}_1^{\inter}, \\
    \mathbf{\dot{L}}_1(t) =& \ -\Sigma_t(t)\mathbf{L}_1(t) + \mathbf{X}_{1}^{{\inter},T}\bm{\psi}(t_1)\mathbf{T}_{M}\bm{\Sigma}(t), \\
    \mathbf{\dot{S}}_1(t) =& \ -\Sigma_t(t)\mathbf{S}_1(t) + \mathbf{X}_1^{1,T}\bm{\psi}(t_1)\mathbf{T}_{M}\bm{\Sigma}(t)\mathbf{W}_1^{1}.
\end{align}
\end{subequations}
The time updated solution after streaming and scattering is given as $\mathbf u_1(t_1) = \mathbf X_1(t_1)\mathbf S_1(t_1)\mathbf W_1(t_1)^T$. When the directed S$_N$ quadrature set has $n_q$ nodes, computational costs are of $O(r\cdot n_q\cdot(n_x+m))$. Note that since radiation therapy uses highly peaked particle beams as boundary conditions or source terms, only a limited number of directions needs to be resolved by the quadrature, i.e., $n_q$ is expected to be small. 

In the same manner, evolution equations for the factors of the solutions to the remaining moment equations \eqref{eq:CSDSplitM} and \eqref{eq:CSDSplitMCol} are derived. Here, the streaming update can be determined with \eqref{eq:KStreaming}, \eqref{eq:LStreaming} and \eqref{eq:SStreaming} (except for \eqref{eq:CSDSplitMUncPN}, since for $\bm{\psi}$ we use a directed S$_N$ method instead of a dynamical low-rank approximation). The scattering update for a general $\ell = 2,\cdots,L$ which we denote by a subscript reads
\begin{subequations}\label{eq:uellScatterKLS}
\begin{align*}
    \mathbf{\dot{K}}_{\ell}(t) =& \ -\Sigma_t(t)\mathbf{K}_{\ell}(t) + \mathbf{X}_{\ell-1}^{1}\mathbf{S}_{\ell-1}^1\mathbf{W}_{\ell-1}^{1,T}\bm{\Sigma}(t)\mathbf{W}_{\ell}^{\inter}, \\
    \mathbf{\dot{L}}_{\ell}(t) =& \ -\Sigma_t(t)\mathbf{L}_{\ell}(t) + \mathbf{X}_{\ell}^{{\inter},T}\mathbf{X}_{\ell-1}^{1}\mathbf{S}_{\ell-1}^1\mathbf{W}_{\ell-1}^{1,T}\bm{\Sigma}(t), \\
    \mathbf{\dot{S}}_{\ell}(t) =& \ -\Sigma_t(t)\mathbf{S}_{\ell}(t) + \mathbf{X}_{\ell}^{1,T}\mathbf{X}_{\ell-1}^{1}\mathbf{S}_{\ell-1}^1\mathbf{W}_{\ell-1}^{1,T}\bm{\Sigma}(t)\mathbf{W}_{\ell}^1.
\end{align*}
\end{subequations}
Computational costs are of $O(r^2\cdot(n_x+m))$. Lastly, for the collided solution we perform a further splitting step. Omitting the subscript $c$, we have 
\begin{subequations}\label{eq:splitStreamScatterCollided}
\begin{alignat}{2}
\mathbf{\dot{u}}_{\RomanNumeralCaps{1}}(t) &= \mathbf F(t,\mathbf u_{\RomanNumeralCaps{1}}(t))\;,\qquad && \mathbf{u}_{\RomanNumeralCaps{1}}(t_0) = \bm{u}_1(t_0)\;,\label{eq:PNStreamingCollided} \\
\mathbf{\dot{u}}_{\RomanNumeralCaps{2}}(t) &=  \mathbf{u}_{L}(t_1)\bm{\Sigma}(t)\;,\qquad && \mathbf{u}_{\RomanNumeralCaps{2}}(t_0) = \mathbf{u}_{\RomanNumeralCaps{1}}(t_1)\;,\label{eq:InCollided}\\
\mathbf{\dot{u}}_{\RomanNumeralCaps{3}}(t) &= -\Sigma_t(t)\mathbf u_{\RomanNumeralCaps{3}}(t)+\mathbf u_{\RomanNumeralCaps{3}}(t)\bm{\Sigma} \;,\qquad && \mathbf{u}_{\RomanNumeralCaps{3}}(t_0) = \mathbf{u}_{\RomanNumeralCaps{2}}(t_1)\;.\label{eq:InOutCollided}
\end{alignat}
\end{subequations}
In this case, the $K$, $S$, $L$-equations for inscattering from $\mathbf{u}_L$, i.e., equation \eqref{eq:InCollided} read (omitting Roman indices)
\begin{subequations}\label{eq:ucScatterKLS}
\begin{align}
    \mathbf{\dot{K}}_{c}(t) =& \ \mathbf{X}_{L}^{1}\mathbf{S}_{L}^1\mathbf{W}_{L}^{1,T}\bm{\Sigma}(t)\mathbf{W}_{c}^{\inter}, \\
    \mathbf{\dot{L}}_{c}(t) =& \ \mathbf{X}_{c}^{{\inter},T}\mathbf{X}_{L}^{1}\mathbf{S}_{L}^1\mathbf{W}_{L}^{1,T}\bm{\Sigma}(t), \\
    \mathbf{\dot{S}}_{c}(t) =& \ \mathbf{X}_{c}^{1,T}\mathbf{X}_{L}^{1}\mathbf{S}_{L}^1\mathbf{W}_{L}^{1,T}\bm{\Sigma}(t)\mathbf{W}_{c}^1.
\end{align}
\end{subequations}
For the in-scattering and out-scattering of the collided flux, i.e., equation \eqref{eq:InOutCollided} we use the matrix projector--splitting integrator. Following \cite{kusch2021stability}, only the $L$-step needs to be computed and we are left with
\begin{align}\label{eq:ucScatterL}
    \mathbf{\dot{L}}_{c}(t) = -\Sigma_t(t)\mathbf L_{c}(t) + \mathbf L_{c}(t)\bm{\Sigma}(t).
\end{align}
The costs for the collided particles are again $O(r^2\cdot(n_x+m))$. According to the derived steps, the scheme then consecutively updates the uncollided particles $\bm{\psi}$, the factors of $\mathbf u_1, \cdots, \mathbf u_L$ and lastly the factors of $\mathbf u_c$. In every step, the factors are first updated by a streaming step, followed by a scattering step. Lastly, when updating the factors of the collided flux, the additional $L$-step is performed to account for self-scattering. This procedure is repeated until a final time (or minimal energy) is reached. A flow chart to visualize the presented method is given in Figure~\ref{fig:flowchart}.
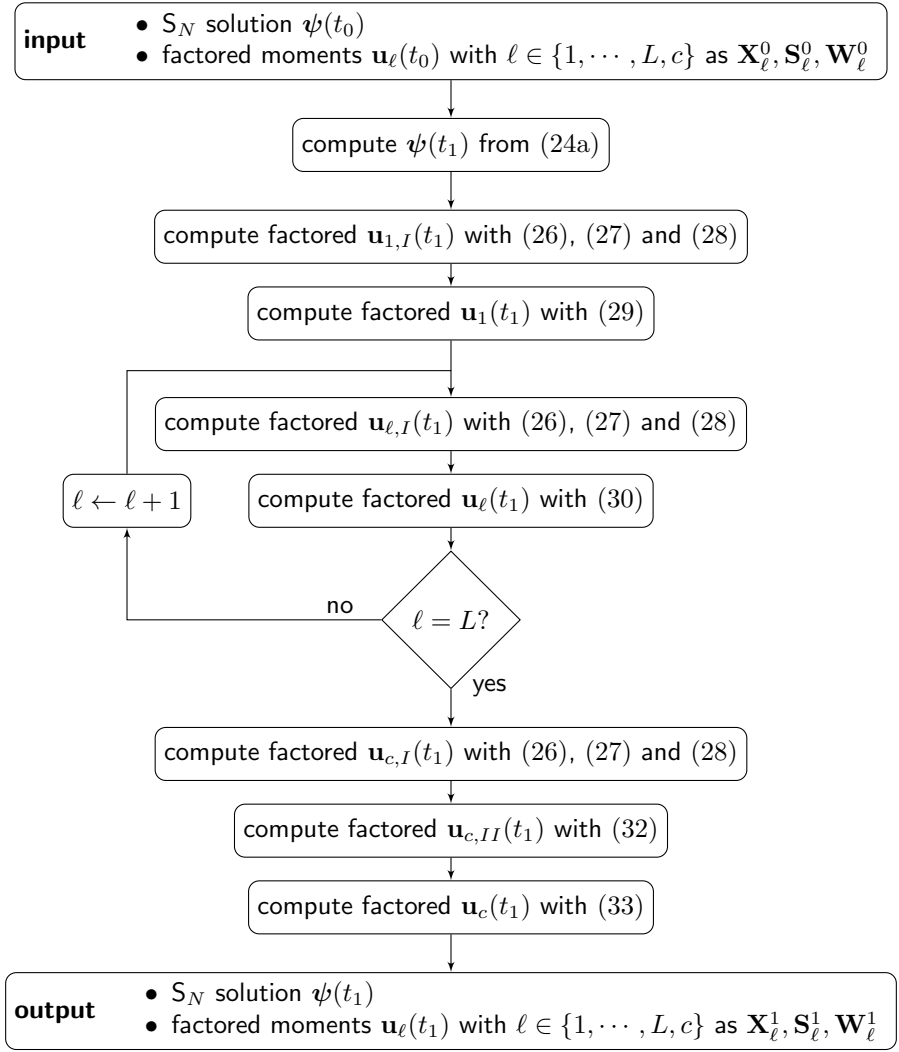
\begin{figure}[htp!]
    \centering
    \begin{tikzpicture}[node distance = 3.5cm,auto,font=\sffamily]
        \node[block](init){
        \textbf{input}
    \begin{varwidth}{\linewidth}\begin{itemize}
        \item S$_N$ solution $\bm{\psi}(t_0)$
        \item factored moments $\mathbf{u}_{\ell}(t_0)$ with $\ell \in\{1,\cdots,L,c\}$ as $\mathbf{X}^0_{\ell},\mathbf{S}^0_{\ell},\mathbf{W}^0_{\ell}$
    \end{itemize}\end{varwidth}
        };
        \node[block, below = 0.5cm of init](SN){compute $\bm{\psi}(t_1)$ from \eqref{eq:CSDSplitMUncPN}};
        \node[block, below = 0.5cm of SN](u1Stream){compute factored $\mathbf{u}_{1,\RomanNumeralCaps{1}}(t_1)$ with \eqref{eq:KStreaming}, \eqref{eq:LStreaming} and \eqref{eq:SStreaming}};
        \node[block, below = 0.3cm of u1Stream](u1Scatter){compute factored $\mathbf{u}_{1}(t_1)$ with \eqref{eq:u1ScatterKLS}};
        \node[block, below = 0.75cm of u1Scatter](uellStream){compute factored $\mathbf{u}_{\ell,\RomanNumeralCaps{1}}(t_1)$ with \eqref{eq:KStreaming}, \eqref{eq:LStreaming} and \eqref{eq:SStreaming}};
        \node[block, below = 0.3cm of uellStream](uellScatter){compute factored $\mathbf{u}_{\ell}(t_1)$ with \eqref{eq:uellScatterKLS}};
        \node[decision, below = 0.3cm of uellScatter](timeCheck){$\ell = L?$};
        \node[block, below = 0.5cm of timeCheck](ucStream){compute factored $\mathbf{u}_{c,\RomanNumeralCaps{1}}(t_1)$ with \eqref{eq:KStreaming}, \eqref{eq:LStreaming} and \eqref{eq:SStreaming}};
        \node[block, below = 0.3cm of ucStream](ucScatter){compute factored $\mathbf{u}_{c,\RomanNumeralCaps{2}}(t_1)$ with \eqref{eq:ucScatterKLS}};
        \node[block, below = 0.3cm of ucScatter](ucScatterL){compute factored $\mathbf{u}_{c}(t_1)$ with \eqref{eq:ucScatterL}};
        %\path[->,max distance=10mm] (dual) edge [loop right] ();
        \node[block, left = 0.75cm of uellScatter](nUp){$\ell\leftarrow \ell+1$};
        \node[block, below = 0.5cm of ucScatterL](out){
        \textbf{output}
    \begin{varwidth}{\linewidth}\begin{itemize}
        \item S$_N$ solution $\bm{\psi}(t_1)$
        \item factored moments $\mathbf{u}_{\ell}(t_1)$ with $\ell \in\{1,\cdots,L,c\}$ as $\mathbf{X}^1_{\ell},\mathbf{S}^1_{\ell},\mathbf{W}^1_{\ell}$
    \end{itemize}\end{varwidth}
        };
        \path[line] (init) -- node [near end] {} (SN);
        \path[line] (SN) -- node [near end] {} (u1Stream);
        \path[line] (u1Stream) -- node [near end] {} (u1Scatter);
        \path[linenoarrow] (u1Scatter) -- ([yshift=0.35cm] uellStream.north);
        \path[line] (uellStream) -- node [near end] {} (uellScatter);
        \path[line] (uellScatter) -- node [near start] {} (timeCheck);
        \path[line] (timeCheck) -- node [near start] {} (ucStream);
        \path[line] (timeCheck.west) -| node [near start] {} (nUp.south);
        \path [line] (nUp.north) |- ([yshift=0.35cm] uellStream.north)-- (uellStream.north);
        \path[line] (ucStream) -- node [near start] {} (ucScatter);
        \path[line] (ucScatter) -- node [near start] {} (ucScatterL);
        \path[line] (ucScatterL) -- node [near start] {} (out);
        \node[above left=-0.5cm and 0.7cm of timeCheck](recomp){no};
        \node[above right=-1.6cm and -0.3cm of timeCheck](recomp){yes};
    \end{tikzpicture}
    \caption{Flowchart of the presented method.}
    \label{fig:flowchart}
\end{figure}
\subsection{Time (or energy) discretization}
The presented equations still continuously depend on the pseudo-time (or energy) $t$. To treat stiff scattering terms, we use an implicit time update method for the scattering equations. The remainder uses explicit time discretizations. More specifically, we use implicit and explicit Euler time-discretizations in this work. Let us start with $\mathbf{X}^{n}_{\ell} = \mathbf{X}^{0}_{\ell}$, $\mathbf{S}^{n}_{\ell} = \mathbf{S}^{0}_{\ell}$ and $\mathbf{W}^{n}_{\ell} = \mathbf{W}^{0}_{\ell}$, where $\ell$ denotes the individual collision steps, i.e., $\ell\in\{1,\cdots,L-1,L,c\}$. The streaming update is the same for all collision steps. Hence, when omitting a specific collision index, we obtain
\begin{subequations}\label{eq:DLRAStreamingEuler}
\begin{alignat}{2}
\mathbf{K}^{\inter} =& \ \mathbf{K}^0 +\Delta t \left(\mathbf L_x^{(2)} \mathbf{K}^0\mathbf{\widehat A}_{x}^0 + \mathbf L_y^{(2)} \mathbf{K}^0\mathbf{\widehat A}_{y}^0 + \mathbf L_x^{(1)} \mathbf{K}^0\vert\mathbf{\widehat A}_{x}\vert^0+ \mathbf L_y^{(1)} \mathbf{K}^0\vert\mathbf{\widehat A}_{y}\vert^0\right)\;, \qquad && \mathbf{X}^{\inter}\mathbf{R}_1 = \mathbf{K}^1\;, \\
\mathbf{L}^{\inter} =& \ \mathbf{L}^0 +\Delta t \left(\mathbf{\widehat L}_x^{(2),0} \mathbf{L}^0 \mathbf A_{x}^T + \mathbf{\widehat L}_y^{(2),0}\mathbf{L}^0 \mathbf A_{y}^T + \mathbf{\widehat L}_x^{(1),0} \mathbf{L}^0 \vert\mathbf A_{x}\vert^T+ \mathbf{\widehat L}_y^{(1),0}\mathbf{L}^0 \vert\mathbf A_{y}\vert^T\right)\;, \qquad && \mathbf{W}^{\inter}\mathbf{R}_2 = \mathbf{L}^{1,T}\;, \\
\mathbf{S}^{\inter} =& \ \mathbf{\widetilde S}^0 +\Delta t \left( \mathbf{\widehat L}_x^{(2),\inter} \mathbf{\widetilde S}^0 \mathbf{\widehat A}_{x}^{\inter} + \mathbf{\widehat L}_y^{(2),\inter}\mathbf{\widetilde S}^0 \mathbf{\widehat A}_{y}^{\inter} + \mathbf{\widehat L}_x^{(1),\inter} \mathbf{\widetilde S}^0 \vert\mathbf{\widehat A}_{x}\vert^{\inter}+ \mathbf{\widehat L}_y^{(1),\inter}\mathbf{\widetilde S}^0 \vert\mathbf{\widehat A}_{y}\vert^{\inter} \right)\;, 
\end{alignat}
\end{subequations}
where $\mathbf{\widetilde S}^0 = \mathbf{X}^{\inter,T}\mathbf{X}^0\mathbf{S}^0\mathbf{W}^{0,T}\mathbf{W}^{\inter}$ and flux matrices are computed before evaluating the right-hand side. The collision equations differ for the $\ell=1$ and $\ell\in\{2,\cdots, L\}$ collided fluxes as well as the collided flux. For $\ell=1$, we have
\begin{subequations}\label{eq:DLRAScatteringl1Euler}
\begin{alignat}{2}
    \mathbf{K}_1^1 =& \ \frac1{1+\Delta t \Sigma(t_1)}\left(\mathbf{K}_1^{\inter} + \Delta t\bm{\psi}(t_1)\mathbf{T}_{M}\bm{\Sigma}(t_1)\mathbf{W}_1^{\inter}\right)\;, \qquad && \mathbf{X}_1^1\mathbf{R}_1 = \mathbf{K}^1_1\;, \\
    \mathbf{L}_1^1 =& \ \frac1{1+\Delta t \Sigma(t_1)}\left(\mathbf{L}_1^{\inter} + \Delta t\mathbf{X}_1^{\inter,T}\bm{\psi}(t_1)\mathbf{T}_{M}\bm{\Sigma}(t_1)\right)\;, \qquad && \mathbf{W}_1^1\mathbf{R}_2 = \mathbf{L}_1^{1,T}\;, \\
    \mathbf{S}_1^1 =& \ \frac1{1+\Delta t \Sigma(t_1)}\left(\mathbf{\widetilde S}_1^{\inter} + \Delta t\mathbf{X}_1^{1,T}\bm{\psi}(t_1)\mathbf{T}_{M}\bm{\Sigma}(t_1)\mathbf{W}_1^1\right).
\end{alignat}
\end{subequations}
For $\ell\in\{2,\cdots, L\}$ we have
\begin{subequations}\label{eq:DLRAScatteringlEuler}
\begin{alignat}{2}
    \mathbf{K}_{\ell}^1 =& \ \frac1{1+\Delta t \Sigma(t_{1})}\left(\mathbf{K}_{\ell}^{\inter} + \Delta t\mathbf{X}_{\ell-1}^{1}\mathbf{S}_{\ell-1}^1\mathbf{W}_{\ell-1}^{1,T}\bm{\Sigma}(t_1)\mathbf{W}_{\ell}^{\inter}\right)\;, \qquad && \mathbf{X}_{\ell}^1\mathbf{R}_1 = \mathbf{K}_{\ell}^1\;, \\
    \mathbf{L}_{\ell}^1 =& \ \frac1{1+\Delta t \Sigma(t_1)}\left(\mathbf{L}_{\ell}^{\inter} + \Delta t\mathbf{X}_{\ell}^{\inter,T}\mathbf{X}_{\ell-1}^{1}\mathbf{S}_{\ell-1}^1\mathbf{W}_{\ell-1}^{1,T}\bm{\Sigma}(t_1)\right)\;, \qquad && \mathbf{W}_{\ell}^1\mathbf{R}_2 = \mathbf{L}_{\ell}^{1,T}\;, \\
    \mathbf{S}_{\ell}^1 =& \ \frac1{1+\Delta t \Sigma(t_1)}\left(\mathbf{\widetilde S}_{\ell}^{\inter} + \Delta t\mathbf{X}_{\ell}^{1,T}\mathbf{X}_{\ell-1}^{1}\mathbf{S}_{\ell-1}^1\mathbf{W}_{\ell-1}^{1,T}\bm{\Sigma}(t_1)\mathbf{W}_{\ell}^1\right).\label{eq:DLRAScatteringlEulerS}
\end{alignat}
\end{subequations}
The collided flux is then updated through
\begin{subequations}\label{eq:DLRACollisionCEuler}
\begin{alignat}{2}
    \mathbf{K}_{c}^1 =& \ \mathbf{K}_{c}^{\inter} + \Delta t\mathbf{X}_{L}^{1}\mathbf{S}_{L}^1\mathbf{W}_{L}^{1,T}\bm{\Sigma}(t_1)\mathbf{W}_{c}^{\inter}\;, \qquad && \mathbf{X}_{c}^1\mathbf{R}_1 = \mathbf{K}^{1}_c\;, \\
    \mathbf{L}_{c}^1 =& \ \mathbf{L}_{c}^{\inter} + \Delta t\mathbf{X}_{c}^{\inter,T}\mathbf{X}_{L}^{1}\mathbf{S}_{L}^1\mathbf{W}_{L}^{1,T}\bm{\Sigma}(t_1)\;, \qquad && \mathbf{\widetilde W}_{c}^1\mathbf{R}_2 = \mathbf{L}_{c}^{1,T}\;, \\
    \mathbf{\widebar S}_{c}^1 =& \ \mathbf{\widetilde S}_{c}^{\inter} + \Delta t\mathbf{X}_{c}^{1,T}\mathbf{X}_{L}^{1}\mathbf{S}_{L}^1\mathbf{W}_{L}^{1,T}\bm{\Sigma}(t_1)\mathbf{\widetilde W}_{c}^1,\\
    \mathbf{\widetilde L}_{c}^1 =& \ \mathbf{\widebar S}_{c}^1\mathbf{\widetilde W}_{c}^{1,T} (\mathbf I+\Sigma_t\Delta t\mathbf I - \Delta t\bm{\Sigma})^{-1}\;, \qquad && \mathbf{W}_{c}^1\mathbf{S}_c^{1,T} = \mathbf{\widetilde L}_{c}^{1,T}\;.\label{eq:SClast}
\end{alignat}
\end{subequations}
Note that since $\bm{\Sigma}$ is a diagonal matrix, the inversion in \eqref{eq:SClast} is given explicitly without having to solve a linear system of equations. The time updated solution is then given by $\mathbf{X}^{n+1}_{\ell} = \mathbf{X}^{1}_{\ell}$, $\mathbf{S}^{n+1}_{\ell} = \mathbf{S}^{1}_{\ell}$ and $\mathbf{W}^{n+1}_{\ell} = \mathbf{W}^{1}_{\ell}$, where $\ell\in\{1,\cdots,L-1,L,c\}$.

\begin{remark}
The proposed idea of multilevel DLRA can be applied in various settings with various strategies. The core ingredient is to write the solution as a sum of different contributions
\begin{align*}
    \mathbf{u}(t) = \mathbf{u}_1(t) + \mathbf{u}_2(t) + \cdots + \mathbf{u}_L(t)\;.
\end{align*}
Strategies to write the solution as a sum of different components can be the use of telescoping identities, a split into symmetric and anti-symmetric solution contributions, a splitting of the original phase space, e.g. particles that move forward and backward and many more. In a second step, evolution equations for every component need to be derived. Third, every component $\mathbf{u}_i$ is represented through a low-rank factorization and evolution equations for every factor are derived with DLRA.
\end{remark}

\section{L$^2$-stability of the proposed scheme}\label{sec:L2Stability}
The derived method is robust in that its time step restriction (or CFL number) does not depend on small singular values of the coefficient matrix or  stiff terms arising in the scattering step. By the choice of the splitting steps, we ensure that this stability is achieved without having to invert matrices or having to solve a nonlinear problem, which for implicit time integration methods is commonly the case. To determine a suitable CFL condition, let us investigate the L$^2$-stability of the proposed scheme, which follows the approach taken in \cite{kusch2021stability}. In contrast to \cite{kusch2021stability}, the update equations include the inverse density, which will pose a challenge. We first note that the inverse density can be pulled out of the stencil matrices. For this, we define the sparse diffusion stencil matrices without density dependence $\mathbf{T}_{x,y}^{(1)}\in \mathbb{R}^{n_x \times n_x}$ as
\begin{align*}
&{T}^{(1)}_{x,\text{idx}(i,j),\text{idx}(i,j)}= \frac{1}{\Delta x}\;, \quad T^{(1)}_{x,\text{idx}(i,j),\text{idx}(i\pm 1,j)}= -\frac{1}{2\Delta x}\;, \\
&{T}^{(1)}_{y,\text{idx}(i,j),\text{idx}(i,j)}= \frac{1}{\Delta y}\;, \quad T^{(1)}_{y,\text{idx}(i,j),\text{idx}(i,j\pm 1)}= -\frac{1}{2\Delta y}\;,
\end{align*}
as well as the sparse advection stencil matrices without density dependence $\mathbf{T}_{x,y}^{(2)}\in \mathbb{R}^{n_x \times n_x}$ as
\begin{align*}
{T}^{(2)}_{x,\text{idx}(i,j),\text{idx}(i\pm 1,j)}= \pm \frac{1}{2\Delta x}\;,\quad T^{(2)}_{y,\text{idx}(i,j),\text{idx}(i,j\pm 1)}= \pm \frac{1}{2\Delta y}\;.
\end{align*}
With $\bm{\rho}^{-1} = \text{diag}\left((\rho_{\text{idx}(i,j)})_{i,j = 1}^{N_x,N_y}\right)$, we have that $\mathbf{L}_{x,y}^{(1,2)}=\mathbf{T}_{x,y}^{(1,2)}\bm{\rho}^{-1}$.
Following \cite{kusch2021stability}, we pursue a von Neumann-like approach and investigate how the scheme amplifies and dampens certain Fourier modes. Let us store these modes in a matrix $\mathbf E\in\mathbb{C}^{n_x\times n_x}$ with entries
\begin{align*}
E_{\text{idx}(\ell, k), \text{idx}(\alpha, \beta)} = \sqrt{\Delta x\Delta y}\exp(i\alpha\pi x_{\ell})\exp(i\beta\pi y_{k})\;,
\end{align*}
where $i\in\mathbb{C}$ denotes the imaginary unit. This matrix has several properties. First, it is orthonormal, i.e., $\mathbf E\mathbf E^H = \mathbf E^H\mathbf E = \mathbf I$, where an uppercase $H$ denotes the complex transpose. Second, the matrix $\bm{E}$ applied to the spatial flux matrices diagonalizes the scheme:
\begin{align}\label{eq:diagScheme}
\mathbf{T}_{x,y}^{(1,2)}\mathbf E = \mathbf E\mathbf D_{x,y}^{(1,2)}\;.
\end{align}
The diagonal matrices $\mathbf D_{x,y}^{(1,2)}\in\mathbb{R}^{n_x\times n_x}$ have entries
\begin{align*}
    D_{x,\text{idx}(\alpha, \beta),\text{idx}(\alpha', \beta')}^{(1)} =& \frac{1}{2\Delta x} \left(e^{i\alpha\pi \Delta x}-2+e^{-i\alpha\pi \Delta x}\right)\delta_{\alpha\alpha'}\delta_{\beta\beta'} = \frac{1}{\Delta x} \left(\cos(\alpha\pi \Delta x)-1\right)\delta_{\alpha\alpha'}\delta_{\beta\beta'}\;,\\ 
        D_{y,\text{idx}(\alpha, \beta),\text{idx}(\alpha', \beta')}^{(1)} =& \frac{1}{2\Delta y} \left(e^{i\alpha\pi \Delta y}-2+e^{-i\alpha\pi \Delta y}\right)\delta_{\alpha\alpha'}\delta_{\beta\beta'}= \frac{1}{\Delta y} \left(\cos(\alpha\pi \Delta y)-1\right)\delta_{\alpha\alpha'}\delta_{\beta\beta'}\;,\\
            D_{x,\text{idx}(\alpha, \beta),\text{idx}(\alpha', \beta')}^{(2)} =& \frac{1}{2\Delta x}(e^{i\alpha\pi \Delta x}-e^{-i\alpha\pi \Delta x})\delta_{\alpha\alpha'}\delta_{\beta\beta'} = -\frac{i}{\Delta x} \sin(\alpha\pi \Delta x)\delta_{\alpha\alpha'}\delta_{\beta\beta'}\;,\\
    D_{y,\text{idx}(\alpha, \beta),\text{idx}(\alpha', \beta')}^{(2)} =& \frac{1}{2\Delta y}(e^{i\alpha\pi \Delta y}-e^{-i\alpha\pi \Delta y})\delta_{\alpha\alpha'}\delta_{\beta\beta'}= -\frac{i}{\Delta y} \sin(\alpha\pi \Delta y)\delta_{\alpha\alpha'}\delta_{\beta\beta'}\;.
\end{align*}
With these tools at hand, we can prove stability. Let us start with the streaming steps:
\begin{theorem}\label{th:firstDRL}
Assume that the CFL condition
\[ \frac{\lambda_{max}(\mathbf{A}_{x,y})}{\rho_{\text{min}}}\frac{\Delta t}{\Delta x} \leq \frac12 \] 
holds true. Then, the streaming scheme \eqref{eq:DLRAStreamingEuler} is L$^2$-stable, i.e., 
\begin{align}\label{eq:CFL}
  \Vert  \mathbf{X}^{\inter}\mathbf{S}^{\inter}\mathbf{W}^{\inter,T} \Vert_F \leq  \Vert \mathbf{X}^0\mathbf{S}^{0}\mathbf{W}^{0,T} \Vert_F\;.
\end{align}
\end{theorem}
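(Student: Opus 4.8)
My plan is to exploit the algebraic structure of the unconventional integrator to collapse the three $K$-, $L$- and $S$-substeps into a single projected explicit Euler step, and then to reduce the low-rank statement to a classical von Neumann estimate for the underlying full-grid scheme. First, since $\mathbf{X}^{\inter}$ and $\mathbf{W}^{\inter}$ have orthonormal columns, $\|\mathbf{X}^{\inter}\mathbf{S}^{\inter}\mathbf{W}^{\inter,T}\|_F=\|\mathbf{S}^{\inter}\|_F$, so it suffices to control the composed update. Writing $\mathbf{P}_X:=\mathbf{X}^{\inter}\mathbf{X}^{\inter,T}$ and $\mathbf{P}_W:=\mathbf{W}^{\inter}\mathbf{W}^{\inter,T}$ for the orthogonal projectors produced by the $K$- and $L$-steps, and using $\mathbf{\widetilde S}^0=\mathbf{X}^{\inter,T}\mathbf{u}^0\mathbf{W}^{\inter}$ with $\mathbf{u}^0:=\mathbf{X}^0\mathbf{S}^0\mathbf{W}^{0,T}$, the $S$-step of \eqref{eq:DLRAStreamingEuler} yields the identity
\[
\mathbf{X}^{\inter}\mathbf{S}^{\inter}\mathbf{W}^{\inter,T} = \mathbf{v} + \Delta t\,\mathbf{P}_X\mathbf{F}(\mathbf{v})\mathbf{P}_W, \qquad \mathbf{v}:=\mathbf{P}_X\mathbf{u}^0\mathbf{P}_W .
\]
The detailed form of $\mathbf{P}_X,\mathbf{P}_W$ is irrelevant for the bound; only their being orthogonal projectors, so that $\mathbf{P}_X\mathbf{v}\mathbf{P}_W=\mathbf{v}$ and $\|\mathbf{P}_X\mathbf{M}\mathbf{P}_W\|_F\le\|\mathbf{M}\|_F$, will be used.

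Second, I would expand the Frobenius norm of the right-hand side. Because $\mathbf{P}_X,\mathbf{P}_W$ are self-adjoint idempotents with $\mathbf{P}_X\mathbf{v}\mathbf{P}_W=\mathbf{v}$, the cross term reduces to the \emph{unprojected} one, $\langle\mathbf{v},\mathbf{P}_X\mathbf{F}(\mathbf{v})\mathbf{P}_W\rangle=\langle\mathbf{v},\mathbf{F}(\mathbf{v})\rangle$, while the quadratic term only shrinks, $\|\mathbf{P}_X\mathbf{F}(\mathbf{v})\mathbf{P}_W\|_F\le\|\mathbf{F}(\mathbf{v})\|_F$. Hence
\[
\|\mathbf{X}^{\inter}\mathbf{S}^{\inter}\mathbf{W}^{\inter,T}\|_F^2 \le \|\mathbf{v}\|_F^2 + 2\Delta t\,\langle\mathbf{v},\mathbf{F}(\mathbf{v})\rangle + \Delta t^2\|\mathbf{F}(\mathbf{v})\|_F^2 .
\]
Since $\|\mathbf{v}\|_F\le\|\mathbf{u}^0\|_F$, the theorem follows once I show the \emph{full-grid} explicit Euler contraction $2\langle\mathbf{v},\mathbf{F}(\mathbf{v})\rangle+\Delta t\|\mathbf{F}(\mathbf{v})\|_F^2\le0$ for every $\mathbf{v}$. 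This is precisely L$^2$-stability of the plain finite volume scheme $\dot{\mathbf{u}}=\mathbf{F}(\mathbf{u})$, so all of the low-rank and basis machinery has been stripped out of the problem.

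Third, I would establish this contraction by a von Neumann analysis. Inserting $\mathbf{L}^{(1,2)}_{x,y}=\mathbf{T}^{(1,2)}_{x,y}\bm{\rho}^{-1}$, diagonalising the symmetric stencils through $\mathbf{T}^{(1,2)}_{x,y}\mathbf{E}=\mathbf{E}\mathbf{D}^{(1,2)}_{x,y}$ and the angular matrices through $\mathbf{A}_{x,y}=\mathbf{V}_{x,y}\mathbf{\Lambda}_{x,y}\mathbf{V}_{x,y}^{T}$, each mode carries an amplification factor of the form $1+\Delta t(\lambda\,d^{(2)}+|\lambda|\,d^{(1)})$. The advection symbol $d^{(2)}=-\tfrac{i}{\Delta x}\sin(\cdot)$ is purely imaginary and the diffusion symbol $d^{(1)}=\tfrac{1}{\Delta x}(\cos(\cdot)-1)\le0$ is real and non-positive; the upwind pairing of $|\lambda|$ with $d^{(1)}$ is exactly what makes $|1+\Delta t(\lambda d^{(2)}+|\lambda|d^{(1)})|\le1$, with the two spatial directions together forcing the factor $\tfrac12$ and the maximal density-weighted speed $\lambda_{max}/\rho_{\text{min}}$ in the CFL number.

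The hard part will be the spatially varying inverse density. The stencils $\mathbf{T}^{(1,2)}_{x,y}$ are diagonalised by the Fourier basis $\mathbf{E}$, whereas $\bm{\rho}^{-1}$ is diagonal in the nodal basis, so the two cannot be diagonalised simultaneously and the clean per-mode decoupling is broken by $\mathbf{L}^{(1,2)}=\mathbf{T}^{(1,2)}\bm{\rho}^{-1}$. I would resolve this by pulling the density out of the stencils and estimating $\bm{\rho}^{-1}\preceq\rho_{\text{min}}^{-1}\mathbf{I}$, i.e.\ freezing the density at its smallest value so that the effective speed in every mode is bounded by $\lambda_{max}/\rho_{\text{min}}$; the skew-symmetry of $\mathbf{T}^{(2)}_{x,y}$ (energy-neutral advection) and the negative semidefiniteness of $\mathbf{T}^{(1)}_{x,y}$ (dissipative diffusion) are what let this worst-case bound close the estimate and reproduce the stated CFL. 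Controlling the commutator contributions $[\mathbf{T}^{(2)}_{x,y},\bm{\rho}^{-1}]$ generated by the non-constant density against the diffusion term is the delicate technical step, and is exactly the point where this argument departs from the constant-density analysis of \cite{kusch2021stability}.
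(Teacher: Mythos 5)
Your reduction is essentially the paper's proof in a slightly different dress. The identity $\mathbf{X}^{\inter}\mathbf{S}^{\inter}\mathbf{W}^{\inter,T}=\mathbf{v}+\Delta t\,\mathbf{P}_X\mathbf{F}(\mathbf{v})\mathbf{P}_W$ with $\mathbf{v}=\mathbf{P}_X\mathbf{u}^0\mathbf{P}_W$ is correct, and your inner-product argument showing $\Vert\mathbf{v}+\Delta t\,\mathbf{P}_X\mathbf{F}(\mathbf{v})\mathbf{P}_W\Vert_F\le\Vert\mathbf{v}+\Delta t\,\mathbf{F}(\mathbf{v})\Vert_F$ is a clean (and arguably more transparent) packaging of what the paper does by bounding $\Vert\mathbf{X}^{\inter,T}\mathbf{E}\Vert=\Vert\mathbf{W}^{\inter}\Vert=1$ after the Fourier transform; both routes reduce the theorem to the full-grid contraction of the explicit Euler step applied to $\mathbf{v}$, followed by the same von Neumann analysis (split the $x$ and $y$ contributions with a factor $\tfrac12$ each, diagonalize $\mathbf{T}^{(1,2)}_{x,y}$ by $\mathbf{E}$ and $\mathbf{A}_{x,y}$ by $\mathbf{V}_{x,y}$, and compute $\vert\tfrac12+\nu(\cos(\alpha\pi\Delta x)-1)-i\nu\sin(\alpha\pi\Delta x)\vert\le\tfrac12$ for $\nu\le\tfrac12$).

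The one place where you stop short is precisely the variable-density step that you flag as ``the delicate technical step.'' The paper does not resolve the commutator issue you raise: after the Fourier transform it simply bounds the per-mode amplification matrix by
\begin{align*}
\left\Vert \tfrac12\mathbf{I}+\Delta t\left(\lambda_k^x\mathbf{D}_x^{(2)}+\vert\lambda_k^x\vert\,\mathbf{D}_x^{(1)}\right)\bm{\widetilde\rho}^{-1}\right\Vert
\le\left\vert\tfrac12+\Delta t\,\lambda_{\text{max}}\!\left(\lambda_k^x\mathbf{D}_x^{(2)}+\vert\lambda_k^x\vert\,\mathbf{D}_x^{(1)}\right)\lambda_{\text{max}}(\bm{\widetilde\rho}^{-1})\right\vert,
\end{align*}
i.e.\ it replaces $\bm{\widetilde\rho}^{-1}=\mathbf{E}^H\bm{\rho}^{-1}\mathbf{E}$ by the scalar $\rho_{\text{min}}^{-1}$ inside the amplification factor --- exactly the ``freeze the density at its worst value'' bound you propose --- and does not engage with the fact that $\mathbf{D}_x^{(1,2)}$ and $\bm{\widetilde\rho}^{-1}$ do not commute, so that $\mathbf{T}^{(2)}_{x}\bm{\rho}^{-1}$ is no longer skew-symmetric. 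Your instinct that this is the genuinely delicate point (and the point of departure from the constant-density analysis of the earlier stability paper) is sound; but be aware that if you insist on rigorously controlling the commutator you will be proving something the paper itself only asserts. As submitted, your proposal is the paper's argument with that final estimate left as a declared intention rather than a completed step.
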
 
\begin{proof}
First, we include the identity $\mathbf E\mathbf E^H$ inside the spatial flux matrices of the $S$-step:
\begin{align*}
\mathbf{\widehat L}_{x,y}^{(1),\inter}=\mathbf{X}^{\inter,T}\mathbf{T}_{x,y}^{(1)}\mathbf E\mathbf E^H\bm{\rho}^{-1} \mathbf{X}^{\inter}\stackrel{\eqref{eq:diagScheme}}{=}\mathbf{X}^{\inter,T}\mathbf E\mathbf{D}_{x,y}^{(1)}\mathbf E^H\bm{\rho}^{-1} \mathbf{X}^{\inter}\;,\\
\mathbf{\widehat L}_{x,y}^{(2),\inter}= \mathbf{X}^{\inter,T}\mathbf{T}_{x,y}^{(2)}\mathbf E\mathbf E^H\bm{\rho}^{-1} \mathbf{X}^{\inter} \stackrel{\eqref{eq:diagScheme}}{=} \mathbf{X}^{\inter,T}\mathbf E\mathbf{D}_{x,y}^{(2)}\mathbf E^H\bm{\rho}^{-1} \mathbf{X}^{\inter}\;.
\end{align*}
Then, with $\bm{\widetilde \rho}^{-1}:= \mathbf E^H\bm{\rho}^{-1} \mathbf E$ and $\mathbf{u} = \mathbf X^{\inter} \mathbf{\widetilde S}^0 \mathbf W^{\inter,T}$, we have
\begin{alignat*}{2}
\mathbf{S}^{\inter} =& \ \mathbf{\widetilde S}^0 &&+ \Delta t \left( \mathbf{\widehat L}_x^{(2),\inter} \mathbf{\widetilde S}^0 \mathbf{\widehat A}_{x}^{\inter} + \mathbf{\widehat L}_y^{(2),\inter}\mathbf{\widetilde S}^0 \mathbf{\widehat A}_{y}^{\inter} + \mathbf{\widehat L}_x^{(1),\inter} \mathbf{\widetilde S}^0 \vert\mathbf{\widehat A}_{x}\vert^{\inter}+ \mathbf{\widehat L}_y^{(1),\inter}\mathbf{\widetilde S}^0 \vert\mathbf{\widehat A}_{y}\vert^{\inter} \right)\\
=& \ \mathbf{X}^{\inter,T}\mathbf E\mathbf E^H\bm{u}\mathbf W^{\inter} &&+ \Delta t \mathbf{X}^{\inter,T}\mathbf E\left( \mathbf{D}_{x}^{(2)} \bm{\widetilde \rho}^{-1}\mathbf E^H\bm{u} \mathbf{A}_{x} + \mathbf{D}_{x}^{(1)} \bm{\widetilde \rho}^{-1}\mathbf E^H\bm{u} \vert\mathbf{A}_{x}\vert\right)\mathbf W^{\inter}\\
& &&+\Delta t \mathbf{X}^{\inter,T}\mathbf E\left(\mathbf{D}_{y}^{(2)}\bm{\widetilde \rho}^{-1}\mathbf E^H\bm{u} \mathbf{A}_{y}+ \mathbf{D}_{y}^{(1)}\bm{\widetilde \rho}^{-1}\mathbf E^H\bm{u} \vert\mathbf{A}_{y}\vert \right)\mathbf W^{\inter}\;.
\end{alignat*}
Taking the norm and noting that for the spectral norm we have $\Vert \mathbf{X}^{\inter,T}\mathbf E \Vert = \Vert \mathbf W^{\inter} \Vert = 1$ yields
\begin{subequations}\label{eq:proof1normS}
\begin{align}
    \Vert \mathbf{S}^{\inter} \Vert_F \leq& \left\Vert \mathbf E^H\bm{u} +\Delta t \left(\mathbf{D}_{x}^{(2)} \bm{\widetilde \rho}^{-1}\mathbf E^H\bm{u} \mathbf{A}_{x} + \mathbf{D}_{x}^{(1)} \bm{\widetilde \rho}^{-1}\mathbf E^H\bm{u} \vert\mathbf{A}_{x}\vert +\mathbf{D}_{y}^{(2)}\bm{\widetilde \rho}^{-1}\mathbf E^H\bm{u} \mathbf{A}_{y}+ \mathbf{D}_{y}^{(1)}\bm{\widetilde \rho}^{-1}\mathbf E^H\bm{u} \vert\mathbf{A}_{y}\vert \right) \right\Vert_F\nonumber\\
    \leq&\left\Vert \frac12\mathbf E^H\bm{u} +\Delta t \left(\mathbf{D}_{x}^{(2)} \bm{\widetilde \rho}^{-1}\mathbf E^H\bm{u} \mathbf{A}_{x}+\mathbf{D}_{x}^{(1)} \bm{\widetilde \rho}^{-1}\mathbf E^H\bm{u} \vert\mathbf{A}_{x}\vert\right)\right\Vert_F\label{eq:proof1a}\\
    &+ \left\Vert\frac12\mathbf E^H\bm{u} +\Delta t\left(\mathbf{D}_{y}^{(2)}\bm{\widetilde \rho}^{-1}\mathbf E^H\bm{u} \mathbf{A}_{y}+ \mathbf{D}_{y}^{(1)}\bm{\widetilde \rho}^{-1}\mathbf E^H\bm{u} \vert\mathbf{A}_{y}\vert \right) \right\Vert_F\;.\label{eq:proof1b}
\end{align}
\end{subequations}
Now we investigate the terms \eqref{eq:proof1a} and \eqref{eq:proof1b} individually. Recall that we have $\mathbf A_{x,y} = \mathbf V_{x,y} \mathbf\Lambda_{x,y} \mathbf V_{x,y}^{T}$. Then for \eqref{eq:proof1a}, which we denote by $\Vert \mathbf{e}\Vert_F$, we define $\mathbf w := \mathbf E^H\bm{u} \mathbf V_{x}$ which gives
\begin{align*}
    \Vert \mathbf{e}\Vert_F =& \ \left\Vert \frac12\mathbf{w}\mathbf V_{x}^T +\Delta t \left(\mathbf{D}_{x}^{(2)} \bm{\widetilde \rho}^{-1}\mathbf{w}\mathbf{\Lambda}_{x}+\mathbf{D}_{x}^{(1)} \bm{\widetilde \rho}^{-1}\mathbf{w} \vert\mathbf{\Lambda}_{x}\vert\right)\mathbf V_{x}^T\right\Vert_F\\
    \leq& \ \left\Vert \frac12\mathbf{w} +\Delta t \left(\mathbf{D}_{x}^{(2)} \bm{\widetilde \rho}^{-1}\mathbf{w} \mathbf{\Lambda}_{x}+\mathbf{D}_{x}^{(1)} \bm{\widetilde \rho}^{-1}\mathbf{w} \vert\mathbf{\Lambda}_{x}\vert\right)\right\Vert_F\;.
\end{align*}
Note that with $\mathbf{e}_k := (e_{jk})_{j=1}^{n_x}$ and $\mathbf{w}_k := (w_{jk})_{j=1}^{n_x}$, we have
\begin{align*}
    \Vert \mathbf{e}\Vert_F^2 =& \sum_{k = 1}^m \Vert \mathbf{e}_k\Vert_2^2 = \sum_{k = 1}^m\left\Vert \frac12\mathbf{w}_k +\Delta t \left(\mathbf{D}_{x}^{(2)} \bm{\widetilde \rho}^{-1}\mathbf{w}_k \lambda_{k}^x+\mathbf{D}_{x}^{(1)} \bm{\widetilde \rho}^{-1}\mathbf{w}_k \vert\lambda_{k}^x\vert\right)\right\Vert_2^2 \\
    \leq & \sum_{k = 1}^m\left\Vert \frac12\mathbf{I} +\Delta t \left(\lambda_{k}^x\mathbf{D}_{x}^{(2)} +\vert\lambda_{k}^x\vert\cdot\mathbf{D}_{x}^{(1)} \right)\bm{\widetilde \rho}^{-1}\right\Vert^2\cdot \left\Vert\mathbf{w}_k\right\Vert_2^2\;,
\end{align*}
where $\Vert\cdot\Vert_2$ denotes the Euclidean norm. Now, for the spectral norm in the above expression we have the upper bound
\begin{align*}
    \left\Vert \frac12\mathbf{I} +\Delta t \left(\lambda_{k}^x\mathbf{D}_{x}^{(2)} +\vert\lambda_{k}^x\vert\cdot\mathbf{D}_{x}^{(1)} \right)\bm{\widetilde \rho}^{-1}\right\Vert \leq \left\vert \frac12 + \Delta t \lambda_{\text{max}}\left( \lambda_{k}^x\mathbf{D}_{x}^{(2)} +\vert\lambda_{k}^x\vert\cdot\mathbf{D}_{x}^{(1)}\right)\cdot \lambda_{\text{max}}(\bm{\widetilde \rho}^{-1}) \right\vert\;.
\end{align*}
We note that $\lambda_{\text{max}}(\bm{\widetilde \rho}^{-1}) = \min_j \rho_j^{-1} =:\rho_{\text{min}}^{-1}$. With $\nu := \frac{\max_k \vert\lambda_{k}^x\vert \Delta t}{\rho_{\text{min}}\Delta x}$ we have
\begin{align*}
    \Vert \mathbf{e}\Vert_F \leq& \max_{\alpha} \left\vert \frac12 + \nu (\cos(\alpha\pi\Delta x)-1) - i\nu \sin(\alpha\pi\Delta x) \right\vert \cdot \left\Vert\mathbf{w}\right\Vert\\
    =& \max_{\alpha} \sqrt{ \frac14 + \nu (\cos(\alpha\pi\Delta x)-1) + \nu^2 (\cos(\alpha\pi\Delta x)-1)^2 + \nu^2 \sin^2(\alpha\pi\Delta x) } \cdot \left\Vert\mathbf{w}\right\Vert\\
    =& \max_{\alpha} \sqrt{ \frac14 + \nu(1- 2\nu )(\cos(\alpha\pi\Delta x)-1)  } \cdot \left\Vert\mathbf{w}\right\Vert\;.
\end{align*}
To obtain stability, we need $\Vert \mathbf{e}\Vert_F \leq \left\Vert\mathbf{w}\right\Vert/2$, i.e., $\nu \leq \frac12$. In the same way, we can get an estimate for \eqref{eq:proof1b}, which yields $\frac{\max_k \vert\lambda_{k}^y\vert \Delta t}{\rho_{\text{min}}\Delta y}\leq \frac12$. With
\begin{align*}
    \Vert \mathbf{w}\Vert_F = \Vert \mathbf E^H\mathbf{u} \mathbf V_{x,y} \Vert_F\leq \Vert \mathbf{u} \Vert_F = \Vert \mathbf X^{\inter} \mathbf{\widetilde S}^0 \mathbf W^{\inter,T} \Vert_F \leq \Vert \mathbf{S}^0 \Vert_F\;,
\end{align*}
we know that $\Vert \mathbf{S}^{\inter} \Vert_F \leq \Vert\mathbf{S}^0\Vert_F$, which with Parseval's identity proves the theorem.
\end{proof}
\begin{theorem}\label{th:scattering}
Assume that the CFL conditions \eqref{eq:CFL} and
\begin{align}\label{eq:CFLScatter}
    \max_k\frac{1}{1+\Delta t\Sigma_t(t)-\Delta t\Sigma_{kk}(t)} \leq 1
\end{align}
hold true for all pseudo-times $t\in [0,T]$. Then, the scheme is L$^2$-stable, in the sense that with $\mathbf{u}_c^1:=\mathbf{X}_c^1\mathbf{S}_c^1\mathbf{W}_c^{1,T}$ and $\mathbf{u}_{\ell}^1:=\mathbf{X}_{\ell}^1\mathbf{S}_{\ell}^1\mathbf{W}_{\ell}^{1,T}$ we have
\begin{align*}
  \Vert\mathbf{u}_c^1\Vert_F+\sum_{\ell=1}^L\Vert\mathbf{u}_{\ell}^1\Vert_F+\Vert\bm{\psi}(t_1)\Vert_F \leq \Vert\mathbf{u}_c^0\Vert_F+\sum_{\ell=1}^L\Vert\mathbf{u}_{\ell}^0\Vert_F+\Vert\bm{\psi}(t_0)\Vert_F\;.
\end{align*}
\end{theorem}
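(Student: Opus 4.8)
The plan is to reduce the evolution of each factored quantity to a non-expansivity estimate on its coefficient matrix, and then to combine these estimates through the cascade structure of the collision source split. Since $\mathbf{X}_{\ell}^1$ and $\mathbf{W}_{\ell}^1$ have orthonormal columns, $\Vert\mathbf{u}_{\ell}^1\Vert_F = \Vert\mathbf{S}_{\ell}^1\Vert_F$, so it suffices to control the Frobenius norms of the $\mathbf{S}$-factors produced in \eqref{eq:DLRAScatteringl1Euler}, \eqref{eq:DLRAScatteringlEuler} and \eqref{eq:DLRACollisionCEuler}. Throughout I abbreviate $a := \Delta t\,\Sigma_t(t_1)\geq 0$ and repeatedly use that left/right multiplication by the orthonormal factors $\mathbf{X}^{1,T}$, $\mathbf{W}^1$ (and the re-projections defining the initial coefficient matrices) is non-expansive in the Frobenius norm, together with the spectral-norm bound $\Vert\bm{\Sigma}(t_1)\Vert \leq \Sigma_t(t_1)$ supplied by \eqref{eq:SigmaKK}.

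First I would dispatch the streaming half-steps. By Theorem~\ref{th:firstDRL} the DLRA streaming update \eqref{eq:DLRAStreamingEuler} satisfies $\Vert\mathbf{S}^{\inter}\Vert_F \leq \Vert\mathbf{S}^0\Vert_F$ under \eqref{eq:CFL}, and the same von Neumann argument applied to the directed S$_N$ streaming operator $\mathbf{F}_S$ shows the uncollided streaming step is likewise non-expansive; the subsequent implicit out-scattering of $\bm{\psi}$ contributes the factor $1/(1+a)$, so that $(1+a)\Vert\bm{\psi}(t_1)\Vert_F \leq \Vert\bm{\psi}(t_0)\Vert_F$. Writing $e_{\ell} := \Vert\mathbf{u}_{\ell}^1\Vert_F$, $e_{\ell}^0 := \Vert\mathbf{u}_{\ell}^0\Vert_F$ and $p := \Vert\bm{\psi}(t_1)\Vert_F$, I would take Frobenius norms in the $\mathbf{S}$-steps of \eqref{eq:DLRAScatteringl1Euler} and \eqref{eq:DLRAScatteringlEuler}; the common prefactor $1/(1+a)$, the estimate $\Delta t\,\Vert\bm{\Sigma}(t_1)\Vert\leq a$, and non-expansiveness of the orthonormal projections yield the uniform recursion
\begin{align*}
(1+a)\,e_{\ell} \;\leq\; e_{\ell}^0 + a\,e_{\ell-1}, \qquad \ell = 1,\dots,L,
\end{align*}
with the convention $e_0 := p$ for the $\ell=1$ source $\bm{\psi}(t_1)\mathbf{T}_M$; here for $\ell=1$ I additionally need $\Vert\bm{\psi}(t_1)\mathbf{T}_M\Vert_F\leq p$, which follows from the normalization of the moment map $\mathbf{T}_M$.

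For the fully collided flux the in-scattering $K/L/S$-step \eqref{eq:DLRACollisionCEuler} is explicit and adds at most $a\,e_L$, while the self-scattering step \eqref{eq:SClast} multiplies on the right by the diagonal matrix $(\mathbf I+\Sigma_t\Delta t\mathbf I-\Delta t\bm{\Sigma})^{-1}$, whose spectral norm is exactly $\max_k (1+\Delta t\Sigma_t-\Delta t\Sigma_{kk})^{-1}\leq 1$ by \eqref{eq:CFLScatter}; hence $e_c \leq e_c^0 + a\,e_L$. The final step is the telescoping: summing the level recursion over $\ell=1,\dots,L$ collapses to $\sum_{\ell}e_{\ell} + a\,e_L \leq \sum_{\ell}e_{\ell}^0 + a\,p$, and adding the collided estimate cancels the two $a\,e_L$ terms to give $\sum_{\ell=1}^L e_{\ell} + e_c \leq \sum_{\ell=1}^L e_{\ell}^0 + e_c^0 + a\,p$. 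Adding $p$ to both sides and absorbing the spurious $a\,p$ via $(1+a)p\leq\Vert\bm{\psi}(t_0)\Vert_F$ then reproduces exactly the claimed inequality.

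I expect the main obstacle to lie in the reduction carried out in the second and third paragraphs: verifying that each of the $K$-, $L$- and $S$-substeps of the unconventional and projector--splitting updates is genuinely non-expansive in the Frobenius norm, so that the clean scalar recursion is legitimate, and in particular controlling the source contribution $\bm{\psi}(t_1)\mathbf{T}_M\bm{\Sigma}$ through the moment map $\mathbf{T}_M$ and the cross-section bound \eqref{eq:SigmaKK}. Once these per-step inequalities are secured the telescoping algebra is routine, and the role of the inverse density has already been absorbed into Theorem~\ref{th:firstDRL}.
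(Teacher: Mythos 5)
Your proposal is correct and follows essentially the same route as the paper's proof: reduce everything to Frobenius norms of the coefficient matrices, invoke Theorem~\ref{th:firstDRL} for the streaming half-steps, derive the per-level recursion $(1+\Delta t\Sigma_t)\Vert\mathbf{S}_{\ell}^1\Vert_F \leq \Vert\mathbf{S}_{\ell}^0\Vert_F + \Delta t\max_k\Sigma_{kk}\,\Vert\mathbf{S}_{\ell-1}^1\Vert_F$ from the implicit scattering updates together with \eqref{eq:SigmaKK}, handle the collided flux via the diagonal resolvent bound from \eqref{eq:CFLScatter}, and telescope while absorbing the leftover $\Delta t\Sigma_t\Vert\bm{\psi}(t_1)\Vert_F$ into $(1+\Delta t\Sigma_t)\Vert\bm{\psi}(t_1)\Vert_F\leq\Vert\bm{\psi}(t_0)\Vert_F$. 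The only differences are cosmetic (you sum the recursion and cancel the $\Delta t\Sigma_t\Vert\mathbf{S}_L^1\Vert_F$ terms rather than substituting recursively from the collided level downward, and you are slightly more explicit than the paper about the required normalization of $\mathbf{T}_M$ and the non-expansivity of the uncollided S$_N$ streaming step).
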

\begin{proof}
We start with the collided equations. The last two steps of \eqref{eq:DLRACollisionCEuler} read
\begin{align*}
    \mathbf{\widebar S}_{c}^1 =& \ \mathbf{\widetilde S}_{c}^{\inter} + \Delta t\mathbf{X}_{c}^{1,T}\mathbf{X}_{L}^{1}\mathbf{S}_{L}^1\mathbf{W}_{L}^{1,T}\bm{\Sigma}(t_1)\mathbf{\widetilde W}_{c}^1\;,\\
    \mathbf{\widetilde L}_{c}^1 =& \ \mathbf{\widebar S}_{c}^1\mathbf{\widetilde W}_{c}^{1,T} (\mathbf I+\Sigma_t\Delta t\mathbf I - \Delta t\bm{\Sigma})^{-1}\;.
\end{align*}
Writing this as a single expression gives
\begin{align*}
    \mathbf{\widetilde L}_{c}^1 = \ \left(\mathbf{\widetilde S}_{c}^{\inter} + \Delta t\mathbf{X}_{c}^{1,T}\mathbf{X}_{L}^{1}\mathbf{S}_{L}^1\mathbf{W}_{L}^{1,T}\bm{\Sigma}(t_1)\mathbf{\widetilde W}_{c}^1\right)\mathbf{\widetilde W}_{c}^{1,T} (\mathbf I+\Sigma_t\Delta t\mathbf I - \Delta t\bm{\Sigma})^{-1}\;.
\end{align*}
We take the norm of the above expression and note that since $\mathbf{\widetilde L}_{c}^1 = \mathbf{S}_{c}^1\mathbf{W}_{c}^{1,T}$
\begin{align*}
    \Vert\mathbf{S}_{c}^1\Vert_F \leq \ \max_k\frac{1}{1+\Delta t\Sigma_t-\Delta t\Sigma_{kk}}\Vert\mathbf{\widetilde S}_{c}^{\inter}\Vert_F + \max_k\frac{\Delta t \Sigma_{kk}}{1+\Delta t\Sigma_t-\Delta t\Sigma_{kk}}\Vert\mathbf{S}_{L}^1\Vert_F\;.
\end{align*}
Note that we used
\begin{align*}
    \Vert \bm\Sigma \Vert \cdot \Vert(\mathbf I+\Sigma_t\Delta t\mathbf I - \Delta t\bm{\Sigma})^{-1}\Vert =\max_k\Sigma_{kk}\cdot \max_k\frac{1}{1+\Delta t\Sigma_t-\Delta t\Sigma_{kk}} = \max_k\frac{\Sigma_{kk}}{1+\Delta t\Sigma_t-\Delta t\Sigma_{kk}}\;.
\end{align*}
Adding $\Vert\mathbf{S}_{L}^1\Vert_F$ to both sides and noting that by Theorem~\ref{th:firstDRL}, we have $\Vert\mathbf{\widetilde S}_{c}^{\inter}\Vert_F \leq\Vert\mathbf{S}_{c}^{\inter}\Vert_F\leq \Vert\mathbf{S}_{c}^0\Vert_F$ leads to
\begin{align*}
    \Vert\mathbf{S}_{c}^1\Vert_F+\Vert\mathbf{S}_{L}^1\Vert_F \leq \ \max_k\frac{1}{1+\Delta t\Sigma_t-\Delta t\Sigma_{kk}}\Vert\mathbf{S}_{c}^0\Vert_F + \max_k\frac{1+\Delta t\Sigma_t}{1+\Delta t\Sigma_t-\Delta t\Sigma_{kk}}\Vert\mathbf{S}_{L}^1\Vert_F\;.
\end{align*}
By \eqref{eq:DLRAScatteringlEulerS} we have
\begin{align*}
    \Vert\mathbf{S}_{L}^1\Vert_F \leq& \frac1{1+\Delta t \Sigma_t}\left(\Vert\mathbf{\widetilde S}_{L}^{\inter}\Vert_F + \Delta t\max_k\Sigma_{kk}\Vert\mathbf{S}_{L-1}^1\Vert_F\right)\\
    \leq& \frac1{1+\Delta t \Sigma_t}\left(\Vert\mathbf{S}_{L}^{0}\Vert_F + \Delta t\max_k\Sigma_{kk}\Vert\mathbf{S}_{L-1}^1\Vert_F\right)\;.
\end{align*}
Using the chosen CFL condition gives
\begin{align*}
    \Vert\mathbf{S}_{c}^1\Vert_F+\Vert\mathbf{S}_{L}^1\Vert_F \leq \ \Vert\mathbf{S}_{c}^0\Vert_F + \Vert\mathbf{S}_{L}^0\Vert_F + \max_k\frac{\Delta t \Sigma_{kk}}{1+\Delta t\Sigma_t-\Delta t\Sigma_{kk}}\Vert\mathbf{S}_{L-1}^1\Vert_F\;.
\end{align*}
Recursively continuing this process until $\ell = 1$ gives
\begin{align*}
    \Vert\mathbf{S}_{c}^1\Vert_F+\sum_{\ell=1}^L\Vert\mathbf{S}_{\ell}^1\Vert_F \leq \ \Vert\mathbf{S}_{c}^0\Vert_F + \sum_{\ell=1}^L\Vert\mathbf{S}_{\ell}^0\Vert_F + \max_k\frac{\Delta t \Sigma_{kk}}{1+\Delta t\Sigma_t-\Delta t\Sigma_{kk}}\Vert\bm{\psi}(t_1)\Vert_F\;.
\end{align*}
Adding $\Vert\bm{\psi}(t_1)\Vert_F$ on both sides, noting that (when choosing an implicit time discretization for scattering of uncollided particles)
\begin{align*}
    \Vert\bm{\psi}(t_1)\Vert_F = \frac{1}{1+\Delta t\Sigma_t}\Vert\bm{\psi}(t_0)\Vert_F
\end{align*}
and the use of Parseval's identity proves the theorem.
\end{proof}
\begin{remark}
Commonly, the CFL condition of the scattering step is fulfilled automatically, since $\Sigma_{kk}\leq \Sigma_t$ for all $k$. Therefore, a time step restriction is only imposed by the streaming update through \eqref{eq:CFL}.
\end{remark}
\section{Extension to rank adaptivity}\label{sec:rankAdapt}
In a last step, we discuss the extension of the proposed scheme to the rank adaptive integrator of \cite{ceruti2021rank} and how it simplifies for a forward Euler time discretization. The core ingredient of this method is to extend the time updated basis with the basis at time $t_0$. I.e., for the streaming step, the updated basis becomes $\mathbf{\widehat{X}}^{\inter} = [\mathbf{X}^0,\mathbf{\widebar{X}}^{\inter}]$, where $\mathbf{\widebar X}^{\inter}$ is chosen such that the column range of $\mathbf{\widehat{X}}^{\inter}$ contains $\mathbf{K}(t_1)$ from the streaming $K$-step and the basis is orthonormal, i.e., $\mathbf{X}^{0,T}\mathbf{\widebar X}^{\inter} = \mathbf{0}$ and $\mathbf{\widebar X}^{\inter,T}\mathbf{\widebar X}^{\inter} = \mathbf{I}$. Hence the matrix to compute the initial condition of the $S$-step reads $\mathbf{\widehat M} = \mathbf{\widehat X}^{\inter,T}\mathbf{X}^0 = [\mathbf{I}, \mathbf{0}]^T\in\mathbb{R}^{2r_0\times r_0}$, where $r_0$ is the rank at time $t_0$. In the same manner, we have $\mathbf{\widehat N} = [\mathbf{I}, \mathbf{0}]^T\in\mathbb{R}^{2r_0\times r_0}$. Hence, as pointed out in \cite{ceruti2021rank}, we have $\mathbf{\widehat X}^{{\inter}}\mathbf{\widehat S}(t_0)\mathbf{\widehat W}^{{\inter},T} = \mathbf{X}^0\mathbf{S}(t_0)\mathbf{W}^{0,T}\in\mathcal{M}_{r_0}$. The fact that the initial condition of the $S$-step is of rank $r_0$ can be used to reduce computational costs when using an explicit Euler step. The $S$-step of the rank adaptive integrator for the streaming step then reads
\begin{align*}
    \mathbf{\widehat S}^{\inter} =& \ [\mathbf{I}, \mathbf{0}]^T\ \mathbf{S}^{0} \ [\mathbf{I}, \mathbf{0}] +\Delta t \left( \mathbf{\widebar L}_x^{(2),\inter} \mathbf{S}^0 \mathbf{\widebar A}_{x}^{\inter} + \mathbf{\widebar L}_y^{(2),\inter}\mathbf{S}^0 \mathbf{\widebar A}_{y}^{\inter} + \mathbf{\widebar L}_x^{(1),\inter} \mathbf{S}^0 \vert\mathbf{\widebar A}_{x}\vert^{\inter}+ \mathbf{\widebar L}_y^{(1),\inter}\mathbf{S}^0 \vert\mathbf{\widebar A}_{y}\vert^{\inter} \right)\;,
\end{align*}
where the flux matrices are given by
\begin{alignat*}{2}
   &\mathbf{\widebar A}_{x,y}^{\inter} := \mathbf{W}^{0,T} \mathbf A_{x,y}^T\mathbf{\widehat W}^{\inter}\in\mathbb{R}^{r_0\times 2r_0}\;, \qquad &&\vert\mathbf{\widebar A}_{x,y}\vert^{\inter}:=\mathbf{W}^{0,T} \vert\mathbf A_{x,y}\vert^T\mathbf{\widehat W}^{\inter}\in\mathbb{R}^{r_0\times 2r_0}\;, \\
   &\mathbf{\widebar L}_{x,y}^{(2),\inter} := \mathbf{\widehat X}^{\inter,T}\mathbf L_{x,y}^{(2)} \mathbf{X}^0\in\mathbb{R}^{2r_0\times r_0}\;, \qquad &&\mathbf{\widebar L}_{x,y}^{(1),\inter}:=\mathbf{\widehat X}^{\inter,T}\mathbf L_{x,y}^{(1)} \mathbf{X}^0\in\mathbb{R}^{2r_0\times r_0}\;.
\end{alignat*}
Hence, the flux matrices that need to be computed have $2 r_0^2$ entries. When using more general time integration schemes to solve the $S$-step for the rank adaptive integrator, the flux matrices have $4 r_0^2$ entries. We determine the solution factors after the streaming update $\mathbf{X}^{\inter},\mathbf{S}^{\inter}$ and $\mathbf{W}^{\inter}$ as well as the rank $r_{\inter}$ through the truncation step of the rank adaptive integrator. For components $\ell \in \{1,2,\cdots,L,c\}$, the scattering steps have modified $S$-step equations 
\begin{align*}
    \mathbf{\widehat S}_1^1 =& \ \frac1{1+\Delta t \Sigma(t_1)}\left([\mathbf{I}, \mathbf{0}]^T\ \mathbf{S}_{1}^{\inter} \ [\mathbf{I}, \mathbf{0}] + \Delta t\mathbf{\widehat X}_1^{1,T}\bm{\psi}(t_1)\mathbf{T}_{M}\bm{\Sigma}(t_1)\mathbf{\widehat W}_1^1\right)\;,\\
    \mathbf{\widehat S}_{\ell}^1 =& \ \frac1{1+\Delta t \Sigma(t_1)}\left([\mathbf{I}, \mathbf{0}]^T\ \mathbf{S}_{\ell}^{\inter} \ [\mathbf{I}, \mathbf{0}] + \Delta t\mathbf{\widehat X}_{\ell}^{1,T}\mathbf{X}_{\ell-1}^{1}\mathbf{S}_{\ell-1}^1\mathbf{W}_{\ell-1}^{1,T}\bm{\Sigma}(t_1)\mathbf{\widehat W}_{\ell}^1\right)\;, \quad \text{for }\ell = 2,\cdots,L\;,\\
    \mathbf{\widehat S}_{c}^1 =& \ [\mathbf{I}, \mathbf{0}]^T\ \mathbf{S}_{c}^{\inter} \ [\mathbf{I}, \mathbf{0}] + \Delta t\mathbf{\widehat X}_{c}^{1,T}\mathbf{X}_{L}^{1}\mathbf{S}_{L}^1\mathbf{W}_{L}^{1,T}\bm{\Sigma}(t_1)\mathbf{\widehat{W}}_{c}^1\;.
\end{align*}
For $\ell \in \{1,\cdots,L,c\}$ we use $\mathbf{\widehat{X}}^1_{\ell} = [\mathbf{X}^{\inter}_{\ell},\mathbf{\widebar X}^1_{\ell}]$, where $\mathbf{\widebar X}^{1}_{\ell}$ is chosen such that the column range of $\mathbf{\widehat{X}}^{1}_{\ell}$ contains $\mathbf{K}_{\ell}(t_1)$ from the scattering $K$-step and the basis is orthonormal, i.e., $\mathbf{X}_{\ell}^{\inter,T}\mathbf{\widebar X}_{\ell}^{1} = \mathbf{0}$ and $\mathbf{\widebar X}_{\ell}^{1,T}\mathbf{\widebar X}_{\ell}^{1} = \mathbf{I}$. The directional basis $\mathbf{\widehat{W}}^1_{\ell}$ is defined analogously. Note that for the scattered particles, we need to do a final $L$-step \eqref{eq:SClast} after having updated the coefficient. In this case, the truncation step of the rank adaptive integrator yields $\mathbf{\widetilde W}_c^1$ and $\mathbf{\widebar S}_c^1$. Since \eqref{eq:SClast} is constructed through the fixed-rank projector--splitting integrator, it will not modify the rank.

Besides allowing for a dynamic choice of the rank, the rank adaptive integrator remains $L^2$ stable.
\begin{theorem}\label{th:firstDRLRankAdapt}
Assume that the CFL condition \eqref{eq:CFL} holds true. Then, the streaming scheme of the rank adaptive integrator is L$^2$-stable, i.e., 
\begin{align}\label{eq:CFLRankAdapt}
  \Vert  \mathbf{X}^{\inter}\mathbf{S}^{\inter}\mathbf{W}^{\inter,T} \Vert_F \leq  \Vert \mathbf{X}^0\mathbf{S}^{0}\mathbf{W}^{0,T} \Vert_F\;.
\end{align}
\end{theorem}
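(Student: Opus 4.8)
The plan is to reduce the rank adaptive streaming update to the fixed rank estimate already established in Theorem~\ref{th:firstDRL}, exploiting that both the basis augmentation and the final truncation are norm non-increasing operations. First I would dispose of the truncation step: after truncation one has $\mathbf{X}^{\inter} = \mathbf{\widehat X}^{\inter}\mathbf{P}_1$ and $\mathbf{W}^{\inter} = \mathbf{\widehat W}^{\inter}\mathbf{Q}_1$ with $\mathbf{P}_1,\mathbf{Q}_1$ having orthonormal columns, so that
\[
\Vert \mathbf{X}^{\inter}\mathbf{S}^{\inter}\mathbf{W}^{\inter,T}\Vert_F = \Vert \mathbf{P}_1\mathbf{S}^{\inter}\mathbf{Q}_1^T\Vert_F = \Bigl(\textstyle\sum_{j=1}^{r_{\inter}}\sigma_j^2\Bigr)^{1/2}\le \Bigl(\textstyle\sum_{j=1}^{2r_0}\sigma_j^2\Bigr)^{1/2} = \Vert \mathbf{\widehat S}^{\inter}\Vert_F.
\]
Hence it suffices to prove $\Vert \mathbf{\widehat S}^{\inter}\Vert_F\le \Vert\mathbf{S}^0\Vert_F=\Vert\mathbf{X}^0\mathbf{S}^0\mathbf{W}^{0,T}\Vert_F$ for the augmented coefficient $\mathbf{\widehat S}^{\inter}$ produced by the rank adaptive $S$-step, where the embedded initial condition $[\mathbf{I},\mathbf{0}]^T\mathbf{S}^0[\mathbf{I},\mathbf{0}]$ plays exactly the role of $\mathbf{\widetilde S}^0$ in Theorem~\ref{th:firstDRL}.

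Second, I would lift $\mathbf{\widehat S}^{\inter}$ back into the full space. Since $\mathbf{\widehat X}^{\inter}$ and $\mathbf{\widehat W}^{\inter}$ are orthonormal, $\Vert\mathbf{\widehat S}^{\inter}\Vert_F=\Vert\mathbf{\widehat X}^{\inter}\mathbf{\widehat S}^{\inter}\mathbf{\widehat W}^{\inter,T}\Vert_F$. Using $\mathbf{\widehat X}^{\inter}[\mathbf{I},\mathbf{0}]^T=\mathbf{X}^0$ and $[\mathbf{I},\mathbf{0}]\mathbf{\widehat W}^{\inter,T}=\mathbf{W}^{0,T}$, the padded initial condition lifts to exactly $\mathbf{u}:=\mathbf{X}^0\mathbf{S}^0\mathbf{W}^{0,T}$, while the definitions $\mathbf{\widebar L}_{x,y}^{(1,2),\inter}=\mathbf{\widehat X}^{\inter,T}\mathbf L_{x,y}^{(1,2)}\mathbf X^0$ and $\mathbf{\widebar A}_{x,y}^{\inter}=\mathbf W^{0,T}\mathbf A_{x,y}^T\mathbf{\widehat W}^{\inter}$ collapse each flux contribution to $\mathbf{\widehat X}^{\inter,T}\mathbf L_{x,y}^{(1,2)}\mathbf u\,\mathbf A_{x,y}^T\mathbf{\widehat W}^{\inter}$ (respectively with $\vert\mathbf A_{x,y}\vert^T$). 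Because $\mathbf u$ already lies in the ranges of $\mathbf X^0\subseteq\mathbf{\widehat X}^{\inter}$ and $\mathbf W^0\subseteq\mathbf{\widehat W}^{\inter}$, so that $\mathbf{\widehat X}^{\inter}\mathbf{\widehat X}^{\inter,T}\mathbf u=\mathbf u$ and $\mathbf u\,\mathbf{\widehat W}^{\inter}\mathbf{\widehat W}^{\inter,T}=\mathbf u$, the entire lifted update equals the orthogonal projections $\mathbf{\widehat X}^{\inter}\mathbf{\widehat X}^{\inter,T}$ and $\mathbf{\widehat W}^{\inter}\mathbf{\widehat W}^{\inter,T}$ applied to
\[
\mathbf{u} + \Delta t\Bigl(\mathbf L_x^{(2)} \mathbf{u}\,\mathbf A_{x}^T + \mathbf L_y^{(2)} \mathbf{u}\,\mathbf A_{y}^T + \mathbf L_x^{(1)} \mathbf{u}\,\vert\mathbf A_{x}\vert^T+ \mathbf L_y^{(1)} \mathbf{u}\,\vert\mathbf A_{y}\vert^T\Bigr),
\]
which is precisely the unprojected explicit Euler streaming step applied to $\mathbf u$.

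Third, since orthogonal projections do not increase the Frobenius norm, $\Vert\mathbf{\widehat S}^{\inter}\Vert_F$ is bounded by the norm of the displayed full-space update, and the von Neumann analysis of Theorem~\ref{th:firstDRL} then applies verbatim: inserting $\mathbf E\mathbf E^H$, diagonalizing via \eqref{eq:diagScheme}, splitting into an $x$- and a $y$-contribution with the factor $\tfrac12$, and using $\mathbf A_{x,y}=\mathbf V_{x,y}\mathbf\Lambda_{x,y}\mathbf V_{x,y}^T$ reduce the estimate to the scalar amplification factors controlled by the CFL condition \eqref{eq:CFL}. I expect the only genuinely new points — and hence the part deserving care — to be the bookkeeping that identifies the rank adaptive $S$-step with this projected full streaming operator: verifying that the asymmetric factors $\mathbf{\widebar A}^{\inter}$, $\mathbf{\widebar L}^{\inter}$ together with the padded initial condition reconstruct exactly $\mathbf u$ plus the projected flux, and confirming that the augmentation keeps the bases orthonormal so that $\Vert\mathbf{\widehat X}^{\inter,T}\mathbf E\Vert=\Vert\mathbf{\widehat W}^{\inter}\Vert=1$ in the spectral norm. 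Once this identification is in place the CFL argument is inherited unchanged and, combined with the truncation estimate, yields \eqref{eq:CFLRankAdapt}.
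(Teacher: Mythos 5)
Your proposal is correct and follows essentially the same route as the paper: both arguments exploit that the augmented bases $\mathbf{\widehat X}^{\inter},\mathbf{\widehat W}^{\inter}$ are orthonormal and that the padded initial condition reproduces $\mathbf{X}^0\mathbf{S}^0\mathbf{W}^{0,T}$, reduce the augmented $S$-step to the full-space streaming update controlled by the von Neumann/CFL analysis of Theorem~\ref{th:firstDRL} (the paper bounds $\Vert\mathbf{\widehat X}^{\inter,T}\mathbf E\Vert=\Vert\mathbf{\widehat W}^{\inter}\Vert=1$ where you phrase the same fact as non-expansiveness of the orthogonal projections), and conclude by noting that the truncation step does not increase the Frobenius norm.
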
 
\begin{proof}
The proof follows the steps taken in Theorems~\ref{th:firstDRL} and \ref{th:scattering}. We begin with the streaming part. Using the Fourier matrix $\mathbf E$, the spatial flux matrices can be written as
\begin{align*}
\mathbf{\widebar L}_{x,y}^{(1),\inter}=\mathbf{\widehat X}^{\inter,T}\mathbf{T}_{x,y}^{(1)}\mathbf E\mathbf E^H\bm{\rho}^{-1} \mathbf{X}^{0}=\mathbf{\widehat X}^{\inter,T}\mathbf E\mathbf{D}_{x,y}^{(1)}\mathbf E^H\bm{\rho}^{-1} \mathbf{X}^{0}\;,\\
\mathbf{\widebar L}_{x,y}^{(2),\inter}= \mathbf{\widehat X}^{\inter,T}\mathbf{T}_{x,y}^{(2)}\mathbf E\mathbf E^H\bm{\rho}^{-1} \mathbf{X}^{0} = \mathbf{\widehat X}^{\inter,T}\mathbf E\mathbf{D}_{x,y}^{(2)}\mathbf E^H\bm{\rho}^{-1} \mathbf{X}^{0}\;.
\end{align*}
With $\bm{\widetilde \rho}^{-1}:= \mathbf E^H\bm{\rho}^{-1} \mathbf E$ and $\mathbf{u}_a = \mathbf{\widehat X}^{\inter} \mathbf{\widetilde S}^0 \mathbf{\widehat W}^{\inter,T} = \mathbf{X}^0\mathbf{S}^0\mathbf{W}^{0,T}$, we have
\begin{alignat*}{2}
\mathbf{\widehat S}^{\inter} =& \ \mathbf{\widehat X}^{\inter,T}\mathbf{u}_a\mathbf{\widehat W}^{\inter} &&+\Delta t \mathbf{\widehat X}^{\inter,T}\mathbf E\left( \mathbf{D}_{x}^{(2)} \bm{\widetilde \rho}^{-1}\mathbf E^H\mathbf{u}_a \mathbf{A}_{x} + \mathbf{D}_{x}^{(1)} \bm{\widetilde \rho}^{-1}\mathbf E^H\mathbf{u}_a \vert\mathbf{A}_{x}\vert\right)\mathbf{\widehat W}^{\inter}\\
& &&+\Delta t \mathbf{\widehat X}^{\inter,T}\mathbf E\left(\mathbf{D}_{y}^{(2)}\bm{\widetilde \rho}^{-1}\mathbf E^H\mathbf{u}_a \mathbf{A}_{y}+ \mathbf{D}_{y}^{(1)}\bm{\widetilde \rho}^{-1}\mathbf E^H\mathbf{u}_a \vert\mathbf{A}_{y}\vert \right)\mathbf{\widehat W}^{\inter}\;.
\end{alignat*}
Since for the spectral norm we have $\Vert \mathbf{\widehat X}^{\inter,T}\mathbf E \Vert = \Vert \mathbf{\widehat W}^{\inter} \Vert = 1$, taking norms yields the inequality \eqref{eq:proof1normS} and the remainder of the proof follows as in Theorem~\ref{th:firstDRL}. Note that instead of $\mathbf{u} = \mathbf X^{\inter} \mathbf{\widetilde S}^0 \mathbf W^{\inter,T}$ we now use $\mathbf{u}_a$. However, this does not impact the derivation, since $\Vert\mathbf{u}_a\Vert_F = \Vert\mathbf{\widehat X}^{\inter} \mathbf{\widetilde S}^0 \mathbf{\widehat W}^{\inter,T}\Vert_F = \Vert \mathbf{S}^0 \Vert_F$. Finally, we note that the truncation step of the rank adaptive integrator does not increase the norm of the solution, in fact
\begin{align*}
    \Vert \mathbf{S}^{\inter} \Vert \leq \Vert \mathbf{\widehat S}^{\inter} \Vert \leq \Vert \mathbf{S}^0 \Vert\;.
\end{align*}
\end{proof}
Lastly, we include scattering contributions.
\begin{theorem}
Assume that the CFL conditions \eqref{eq:CFL} and \eqref{eq:CFLScatter} hold true for all pseudo-times $t\in [0,T]$. Then, the scheme is L$^2$-stable, in the sense that with $\mathbf{u}_c^1:=\mathbf{X}_c^1\mathbf{S}_c^1\mathbf{W}_c^{1,T}$ and $\mathbf{u}_{\ell}^1:=\mathbf{X}_{\ell}^1\mathbf{S}_{\ell}^1\mathbf{W}_{\ell}^{1,T}$ we have
\begin{align*}
  \Vert\mathbf{u}_c^1\Vert_F+\sum_{\ell=1}^L\Vert\mathbf{u}_{\ell}^1\Vert_F+\Vert\bm{\psi}(t_1)\Vert_F \leq \Vert\mathbf{u}_c^0\Vert_F+\sum_{\ell=1}^L\Vert\mathbf{u}_{\ell}^0\Vert_F+\Vert\bm{\psi}(t_0)\Vert_F\;.
\end{align*}
\end{theorem}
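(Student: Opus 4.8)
The plan is to follow the recursive structure of the proof of Theorem~\ref{th:scattering} essentially verbatim, substituting two ingredients: the rank adaptive streaming estimate of Theorem~\ref{th:firstDRLRankAdapt} in place of Theorem~\ref{th:firstDRL}, and the observation that the truncation step of the rank adaptive integrator is non-expansive in the Frobenius norm.

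First I would record the elementary facts that neutralize the enlarged $2r$ basis. Since $\mathbf{\widehat X}_\ell^1$ and $\mathbf{\widehat W}_\ell^1$ carry orthonormal columns, their spectral norms equal one, exactly as for the fixed rank factors $\mathbf X_\ell^1,\mathbf W_\ell^1$. The padding $[\mathbf I,\mathbf 0]^T\mathbf S_\ell^{\inter}[\mathbf I,\mathbf 0]$ only inserts zero rows and columns, whence $\Vert[\mathbf I,\mathbf 0]^T\mathbf S_\ell^{\inter}[\mathbf I,\mathbf 0]\Vert_F = \Vert\mathbf S_\ell^{\inter}\Vert_F$. Taking Frobenius norms of the rank adaptive $S$-step equations for $\mathbf{\widehat S}_1^1$, $\mathbf{\widehat S}_\ell^1$ and $\mathbf{\widehat S}_c^1$ therefore reproduces, term by term, the same estimates as in Theorem~\ref{th:scattering}, now bounding the enlarged coefficients $\Vert\mathbf{\widehat S}_\ell^1\Vert_F$.

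Next I would insert the truncation inequality. Because the new factors retain only the $r_1$ dominant singular values of $\mathbf{\widehat S}_\ell^1$, one has $\Vert\mathbf S_\ell^1\Vert_F\le\Vert\mathbf{\widehat S}_\ell^1\Vert_F$ for $\ell\in\{1,\dots,L\}$, and $\Vert\mathbf{\widebar S}_c^1\Vert_F\le\Vert\mathbf{\widehat S}_c^1\Vert_F$ for the collided flux before its final $L$-step. Chaining the $S$-step estimates through this inequality, together with the streaming bound $\Vert\mathbf S_\ell^{\inter}\Vert_F\le\Vert\mathbf S_\ell^0\Vert_F$ from Theorem~\ref{th:firstDRLRankAdapt}, yields exactly the recursions of Theorem~\ref{th:scattering}, for instance
\begin{align*}
\Vert\mathbf S_L^1\Vert_F \le \frac{1}{1+\Delta t\Sigma_t}\Bigl(\Vert\mathbf S_L^0\Vert_F + \Delta t\max_k\Sigma_{kk}\,\Vert\mathbf S_{L-1}^1\Vert_F\Bigr).
\end{align*}
For the collided particles the self-scattering $L$-step \eqref{eq:SClast} is a fixed rank projector--splitting step applied after the truncation; since $\Vert(\mathbf I+\Sigma_t\Delta t\mathbf I-\Delta t\bm\Sigma)^{-1}\Vert = \max_k(1+\Delta t\Sigma_t-\Delta t\Sigma_{kk})^{-1}\le 1$ by \eqref{eq:CFLScatter}, this step is non-expansive and the collided estimate of Theorem~\ref{th:scattering} is recovered from $\Vert\mathbf{\widebar S}_c^1\Vert_F\le\Vert\mathbf S_c^0\Vert_F+\Delta t\max_k\Sigma_{kk}\,\Vert\mathbf S_L^1\Vert_F$.

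With these bounds in hand, the conclusion follows exactly as before: add $\Vert\mathbf S_L^1\Vert_F$ to both sides, telescope over $\ell=L,L-1,\dots,1$ using the scattering CFL condition \eqref{eq:CFLScatter}, add $\Vert\bm\psi(t_1)\Vert_F = (1+\Delta t\Sigma_t)^{-1}\Vert\bm\psi(t_0)\Vert_F$, and invoke Parseval's identity to return from the coefficient matrices to the full solution norms. I expect the only genuine point of care---rather than a real obstacle---to be the reordering forced by the rank adaptive scheme: the truncation now intervenes between the in-scattering $S$-step and the projector--splitting $L$-step of the collided flux, so one must check that applying the non-expansive truncation to $\mathbf{\widehat S}_c^1$ first still leaves the subsequent contraction $(\mathbf I+\Sigma_t\Delta t\mathbf I-\Delta t\bm\Sigma)^{-1}$ acting on a quantity no larger than in the fixed rank case, so that the worst-case constants coincide.
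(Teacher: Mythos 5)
Your proposal is correct and follows essentially the same route as the paper's proof: reduce to the recursion of Theorem~\ref{th:scattering} by observing that the basis extension (zero-padding of $\mathbf S$) and the truncation step are non-expansive in the Frobenius norm, invoke Theorem~\ref{th:firstDRLRankAdapt} for the streaming bound, and handle the collided flux's final projector--splitting $L$-step via $\Vert(\mathbf I+\Sigma_t\Delta t\mathbf I-\Delta t\bm\Sigma)^{-1}\Vert\le 1$ under \eqref{eq:CFLScatter}. The ordering issue you flag at the end (truncation between the in-scattering $S$-step and the self-scattering $L$-step) is resolved exactly as you suggest, through $\Vert\mathbf{\widebar S}_c^1\Vert_F\le\Vert\mathbf{\widehat S}_c^1\Vert_F$.
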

\begin{proof}
The proof is essentially that of Theorem~\ref{th:scattering}. The only difference is that we extend and truncate the basis and coefficient matrices. However, since both of these operations do not increase the Frobenius norm of the solution, the stability property is not violated by rank adaptivity. To make this observation more rigorous, we start with the collided equations. The last two steps of \eqref{eq:DLRACollisionCEuler} read
\begin{align*}
\mathbf{\widehat S}_{c}^1 =& \ [\mathbf{I}, \mathbf{0}]^T\ \mathbf{S}_{c}^{\inter} \ [\mathbf{I}, \mathbf{0}] + \Delta t\mathbf{\widehat X}_{c}^{1,T}\mathbf{X}_{L}^{1}\mathbf{S}_{L}^1\mathbf{W}_{L}^{1,T}\bm{\Sigma}(t_1)\mathbf{\widehat{W}}_{c}^1\;, \\
    \mathbf{\widetilde L}_{c}^1 =& \ \mathbf{\widebar S}_{c}^1\mathbf{\widetilde W}_{c}^{1,T} (\mathbf I+\Sigma_t\Delta t\mathbf I - \Delta t\bm{\Sigma})^{-1}\;.
\end{align*}
Note that $\Vert\mathbf{\widetilde L}_{c}^1\Vert_F = \Vert\mathbf{S}_{c}^1\Vert_F$ and $\Vert \mathbf{\widebar S}_{c}^1 \Vert_F\leq\Vert \mathbf{\widehat S}_{c}^1 \Vert_F$. Hence, taking norms of the above equations yields
\begin{align*}
\Vert\mathbf{\widehat S}_{c}^1\Vert_F \leq& \ \Vert \mathbf{S}_{c}^{\inter} \Vert_F + \Delta t\max_k\Sigma_{kk}\Vert\mathbf{S}_{L}^1\Vert_F \;, \\
    \Vert \mathbf{S}_{c}^1 \Vert_F \leq& \max_k\frac1{1+\Sigma_t\Delta t- \Delta t \Sigma_{kk}}\Vert \mathbf{\widehat S}_{c}^{1} \Vert_F = \frac1{1+\Sigma_t\Delta t- \Delta t \max_k\Sigma_{kk}}\Vert \mathbf{\widehat S}_{c}^{1} \Vert_F\;,
\end{align*}
where we used that $\max_k\vert\Sigma_{kk}\vert = \max_k\Sigma_{kk}$. Written as a single expression, we have
\begin{align}\label{eq:th4collided}
    \Vert \mathbf{S}_{c}^1 \Vert_F \leq& \max_k\frac1{1+\Sigma_t\Delta t- \Delta t \Sigma_{kk}}\left( \Vert \mathbf{S}_{c}^{\inter} \Vert_F + \Delta t\max_k\Sigma_{kk}\Vert\mathbf{S}_{L}^1\Vert_F \right) \\
    \leq& \Vert \mathbf{S}_{c}^{\inter} \Vert_F + \max_k\frac{\Delta t\Sigma_{kk}}{1+\Sigma_t\Delta t- \Delta t \Sigma_{kk}}\Vert\mathbf{S}_{L}^1\Vert_F\;.
\end{align}
For the $L$-collided particles, the coefficient vector reads
\begin{align*}
    \Vert \mathbf{S}_{L}^1 \Vert_F \leq \Vert \mathbf{\widehat S}_{L}^1 \Vert_F \leq \frac1{1+\Delta t \Sigma_t}\left(\Vert\mathbf{S}_{L}^{\inter}\Vert_F + \Delta t\max_k\Sigma_{kk}\Vert\mathbf{S}_{L-1}^1\Vert_F\right)\;.
\end{align*}
Adding $\Vert \mathbf{S}_{L}^1 \Vert_F$ to both sides of \eqref{eq:th4collided} gives
\begin{align*}
    \Vert \mathbf{S}_{c}^1 \Vert_F + \Vert \mathbf{S}_{L}^1 \Vert_F \leq& \Vert \mathbf{S}_{c}^{\inter} \Vert_F + \max_k\frac{1+\Sigma_t\Delta t}{1+\Sigma_t\Delta t- \Delta t \Sigma_{kk}}\Vert\mathbf{S}_{L}^1\Vert_F \\
    \leq& \Vert \mathbf{S}_{c}^{\inter} \Vert_F + \Vert \mathbf{S}_{L}^{\inter} \Vert_F + \max_k\frac{\Delta t \Sigma_{kk}}{1+\Sigma_t\Delta t- \Delta t \Sigma_{kk}}\Vert\mathbf{S}_{L-1}^1\Vert_F\;.
\end{align*}
By Theorem~\ref{th:firstDRLRankAdapt}, we have that 
\begin{align*}
    \Vert \mathbf{S}_{c}^1 \Vert_F + \Vert \mathbf{S}_{L}^1 \Vert_F \leq \Vert \mathbf{S}_{c}^{0} \Vert_F + \Vert \mathbf{S}_{L}^{0} \Vert_F + \max_k\frac{\Delta t \Sigma_{kk}}{1+\Sigma_t\Delta t- \Delta t \Sigma_{kk}}\Vert\mathbf{S}_{L-1}^1\Vert_F\;.
\end{align*}
Continuing recursively and following the last steps of the proof for Theorem~\ref{th:scattering} proves the statement.
\end{proof}
\section{Numerical results}\label{sec:results}
In the following, we demonstrate numerical experiments to compare conventional and the proposed methods. All results can be reproduced with the openly available code framework \cite{code}.
\subsection{Line source benchmark}\label{sec:linesource}
To demonstrate the applicability of the proposed collision source method for dynamical low-rank approximation, we first take a look at the time dependent radiation transport equation for the \textit{line source} benchmark \cite{ganapol1999homogeneous,ganapol2008analytical}
\begin{equation}
	\label{eq:rte}
	\begin{aligned}
		&\partial_t \psi + \mathbf{\Omega}\cdot\nabla_{\mathbf{x}} \psi + \Sigma_s \psi = \frac{\Sigma_s}{4\pi} \int_{\mathbb{S}^2} \psi \,d\mathbf{\Omega}\;,
		\qquad 
		(\mathbf{x}, \mathbf{\Omega}) \in [-1.5,1.5]^2 \times \mathbb{S}^2 \;, \\[1mm]
		& \psi(t_0) = \frac{1}{4\pi\sigma^2}\exp\left(-\frac{\Vert \mathbf{x} \Vert^2}{4\sigma^2} \right)\;,
	\end{aligned}
\end{equation}
where $\Sigma_s = 1$ and $\sigma = 0.03$. This equation can be recovered from the continuous slowing down approximation when choosing $\rho \equiv 1$ and treating the energy variable as time. The line source benchmark is a common test case for radiation transport problems, exposing deficiencies of different methods. A comparison of conventional methods for this benchmark can for example be found in \cite{garrett2013comparison}. Common methods require high computational costs or parameter tuning to yield a satisfactory approximation. Uses of dynamical low-rank approximation for this benchmark are \cite{PeMF20,PeM20,ceruti2021rank}, where it is observed that high ranks are needed to achieve a desired level of accuracy. Nevertheless, in comparison to classical methods, DLRA yields reduced runtimes and memory requirements. We use the following computational parameters for our calculations:
\begin{center}
    \begin{tabular}{ | l | p{8cm} |}
    \hline
    $n_x = N_x\cdot N_y=40000$ & number of spatial cells \\
    $m = (N+1)^2 = 484$ & number of spherical basis functions \\
    $n_q = 968$ & number of quadrature points for uncollided flux \\
    $t_{\text{end}}=1$ & end time \\
    \hline
    \end{tabular}
\end{center}
\begin{figure}[htp!]
    \centering
    \includegraphics[width=\linewidth]{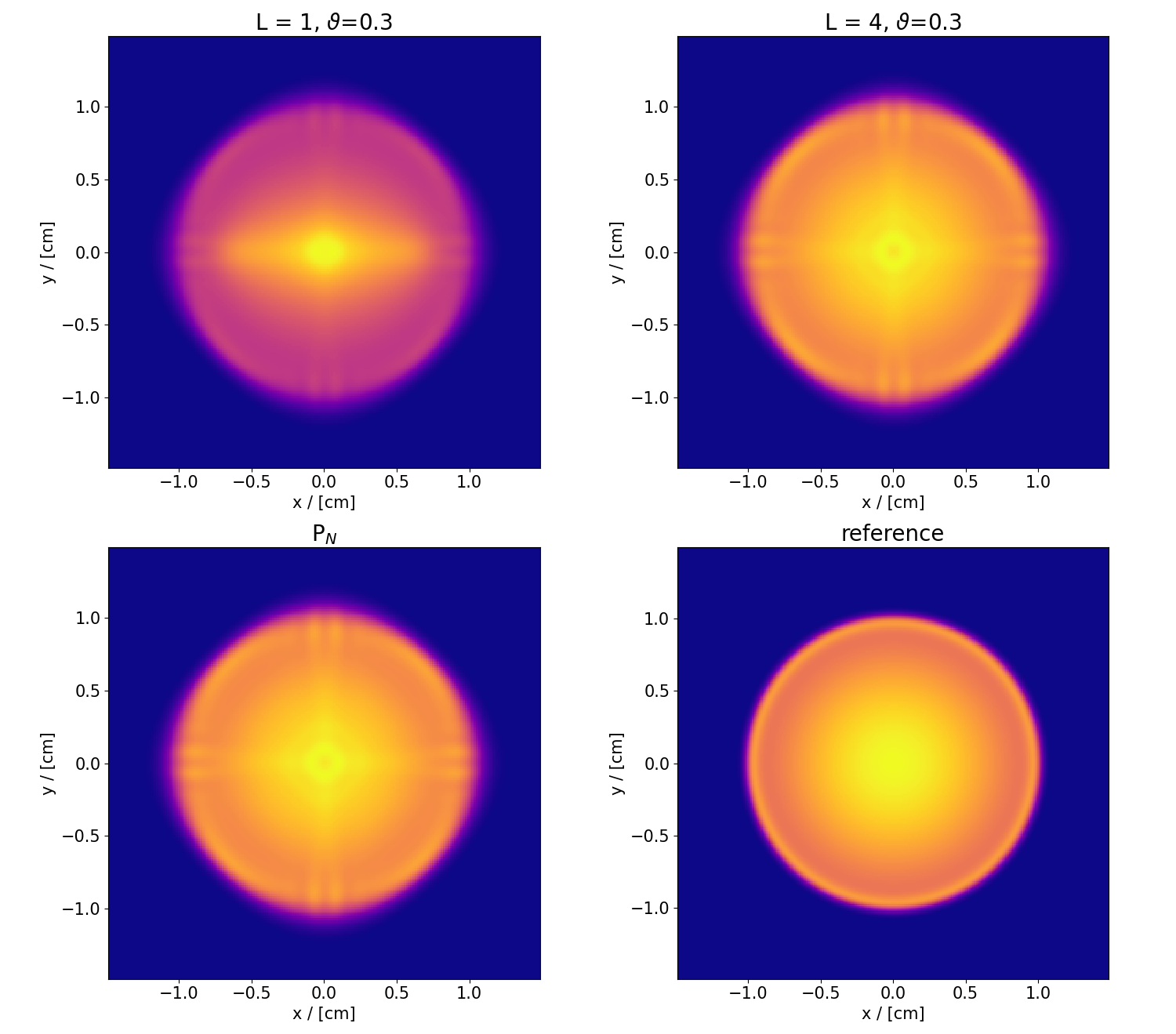}
    \caption{Scalar flux $\Phi(t=1,\mathbf{x}) = \int_{\mathbb{S}^2} \psi(t=1,\mathbf{x},\mathbf{\Omega})\,d\mathbf{\Omega} $ with different methods and analytic reference solution.}
    \label{fig:scalarFlux}
\end{figure}
The scalar flux $\Phi(t=1,\mathbf{x}) = \int_{\mathbb{S}^2} \psi(t=1,\mathbf{x},\mathbf{\Omega})\,d\mathbf{\Omega}$ computed with different methods can be found in Figure~\ref{fig:scalarFlux}. We observe a significant increase in the solution quality when using $L=4$ instead of $L=1$ levels. The level $4$ approximation with a tolerance parameter of $\vartheta = 0.3$ agrees well with the P$_N$ solution. Here, we use the term P$_N$ to indicate the use of an S$_N$ solver for uncoolided particles as well as a $P_N$ solver for the remainder. While P$_N$ takes $5408$ seconds to compute the scalar flux at time $t=1$, the DLRA methods with $L=1$ and $L=4$ levels only require $1009$ and $1278$ seconds respectively. Since particles move into all directions, a main factor in this runtime is the S$_N$ solution. Taking a look at the rank evolution in time, depicted in Figure~\ref{fig:rankInTime}, we see that most information is carried by the uncollided flux as well as solution components with little collisions.
\begin{figure}[htp!]
\centering
\begin{subfigure}{.5\textwidth}
  \centering
  \includegraphics[width=\linewidth]{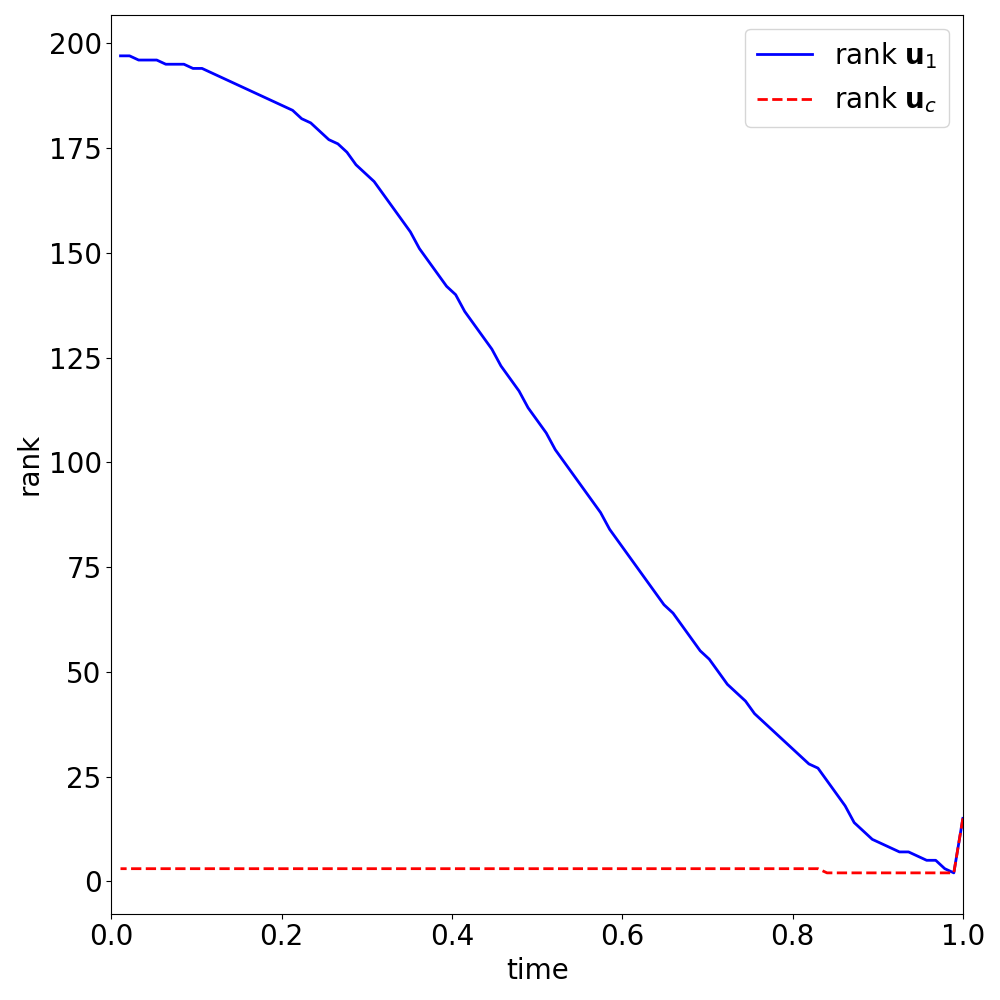}
  \caption{$\vartheta = 0.3, L = 1$}
  \label{fig:sub1}
\end{subfigure}%
\begin{subfigure}{.5\textwidth}
  \centering
  \includegraphics[width=\linewidth]{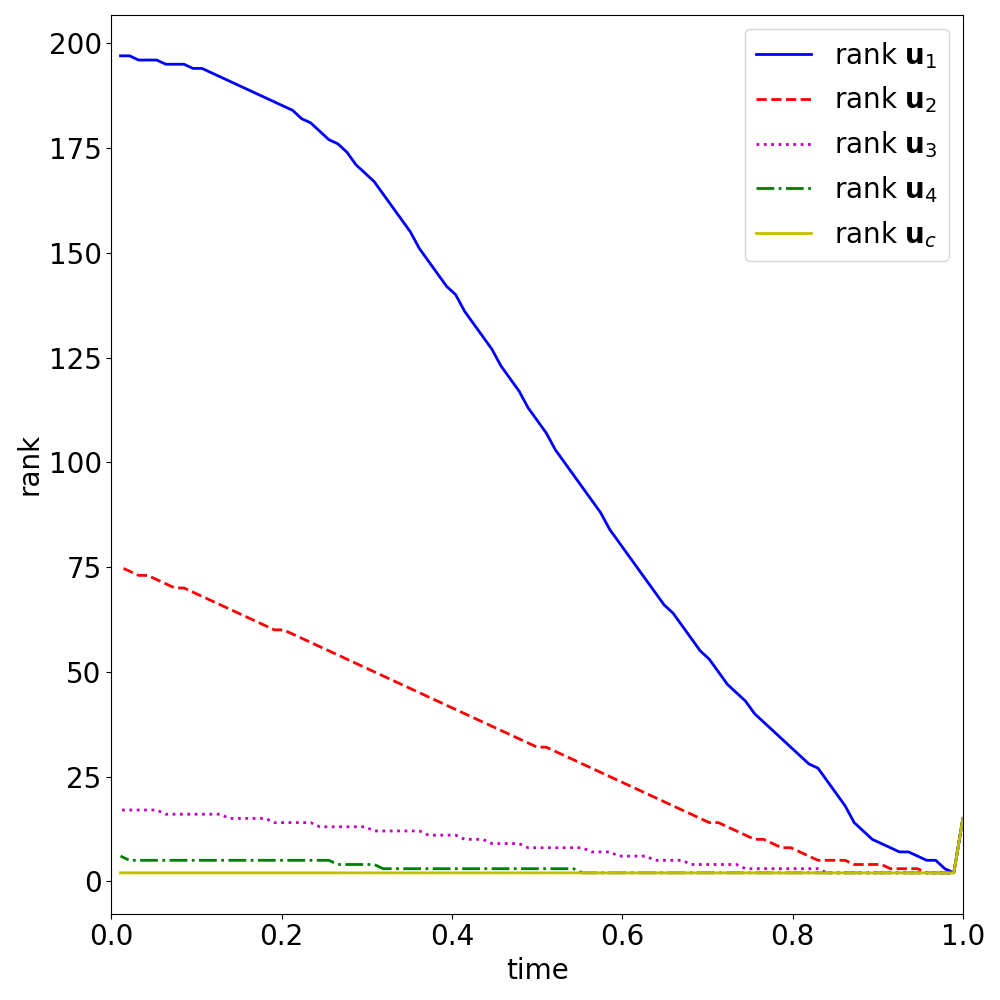}
  \caption{$\vartheta = 0.3, L = 4$}
  \label{fig:sub2}
\end{subfigure}
\caption{Rank evolution in energy for different $\vartheta = \bar\vartheta\cdot\Vert \mathbf{S} \Vert_F$.}
\label{fig:rankInTime}
\end{figure}
\subsection{Treatment planning for lung patient}
In the following we examine the application of the proposed method to a realistic 2D CT scan of a lung patient. The patient is radiated with an electron beam of $E_{\text{max}} = 21$ MeV. We model this beam as
\begin{align*}
    \psi_{\text{in}}(E,\mathbf x, \mathbf \Omega) = 10^5&\cdot\exp(-(\Omega_{1,*}-\Omega_1)^2/\sigma_{\Omega_1})\cdot\exp(-(E_{\text{max}}-E)^2/ \sigma_E)\\
    &\cdot\exp(-(x_*-x)^2/\sigma_x)\cdot\exp(-(y_*-y)^2/\sigma_y)\;,
\end{align*}
which is used as boundary condition for the uncollided particle flux. To determine a tissue density $\rho$ for given gray-scale values of the CT image, we assume a value of one, i.e., a white pixel to consist of bone material with density $\rho_{\text{bone}} = 1.85\text{ g}/\text{cm}^3$. The remaining tissue is scaled such that a pixel value of zero corresponds to a minimal density of $\rho_{\text{min}} = 0.05 \text{ g}/\text{cm}^3$. Air around the patient is filled with material, since this region does not impact the dose distribution. The chosen settings are the same as in Section~\ref{sec:linesource}. Since we are using a directed particle beam as boundary condition for the uncollided particles, the number of quadrature points $n_q$ reduces by over $59$ percent. The remaining parameters are:
\begin{center}
    \begin{tabular}{ | l | p{8cm} |}
    \hline
    $n_q = 396$ & number of quadrature points for uncollided flux \\
    $E_{\text{max}}=21$ & energy of beam in MeV \\
    $x_* = 7.25, y_* = 14.5$ & spatial mean of particle beam in cm\\
    $\Omega_{1,*} = 1$ & directional mean of particle beam \\
    $\sigma_{\Omega_1}^{-1}= 75$ & inverse directional beam variance \\
    $\sigma_x^{-1}=\sigma_y^{-1} = 20$ & inverse spatial beam variance \\
    $\sigma_E^{-1}=100$ & inverse energy variance \\
    \hline
    \end{tabular}
\end{center}
\begin{figure}[htp!]
    \centering
    \includegraphics[width=\linewidth]{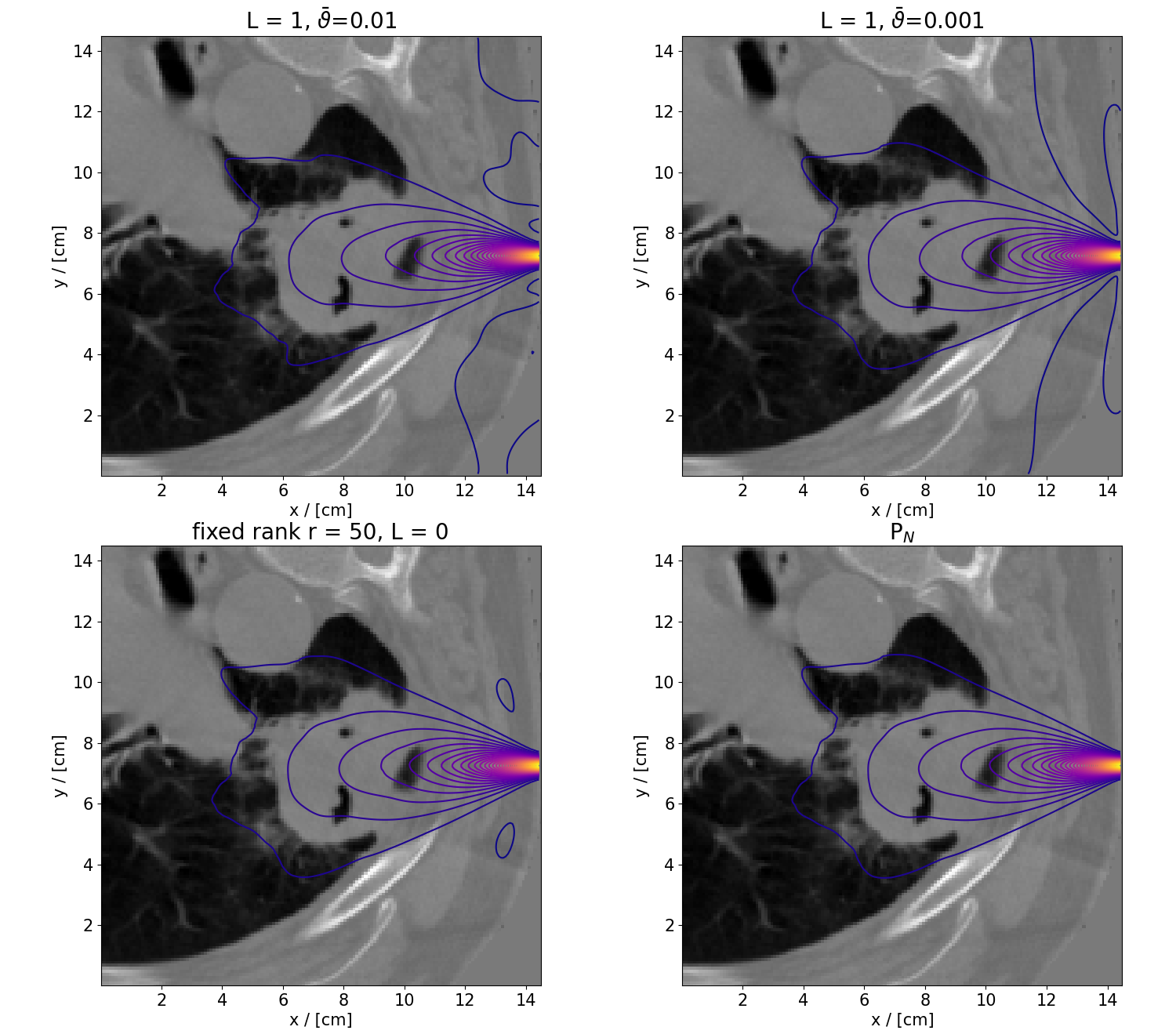}
    \caption{Dose distribution with different methods.}
    \label{fig:doseLung}
\end{figure}
For this setting we compute the full P$_N$ solution, the proposed dynamical low-rank method with a fixed rank of $50$ consisting only of collided and uncollided particles as well as the rank adaptive version with $L=1$ intermediate levels. Due to its reduced computational costs, the DLRA methods show a significantly reduced runtime. While the full P$_N$ method runs for 47329 seconds, the DLRA methods have a runtime of 4306, 4635 and 5392 seconds respectively. The resulting dose distribution can be found in Figure~\ref{fig:doseLung}. All considered variations of the proposed method are able to capture the effect of heterogeneities in the patient density and agree very well with the P$_N$ solution in the relevant dose areas. The efficiency of the method concerning both time and memory makes it feasible for practical applications. This includes the generation of optimal treatment plans with gradient-based optimization methods. It is observed that choosing a low refinement tolerance $\vartheta = \bar{\vartheta}\cdot\Vert \mathbf{S}\Vert_F$ with $\bar{\vartheta} = 0.01$ leads to a slight difference to the full solution for the smallest isoline. Note that isolines appearing below and above the main beam on the right are artifacts by dose values close to zero and are not of interest.
\begin{figure}[htp!]
\centering
\begin{subfigure}{.5\textwidth}
  \centering
  \includegraphics[width=\linewidth]{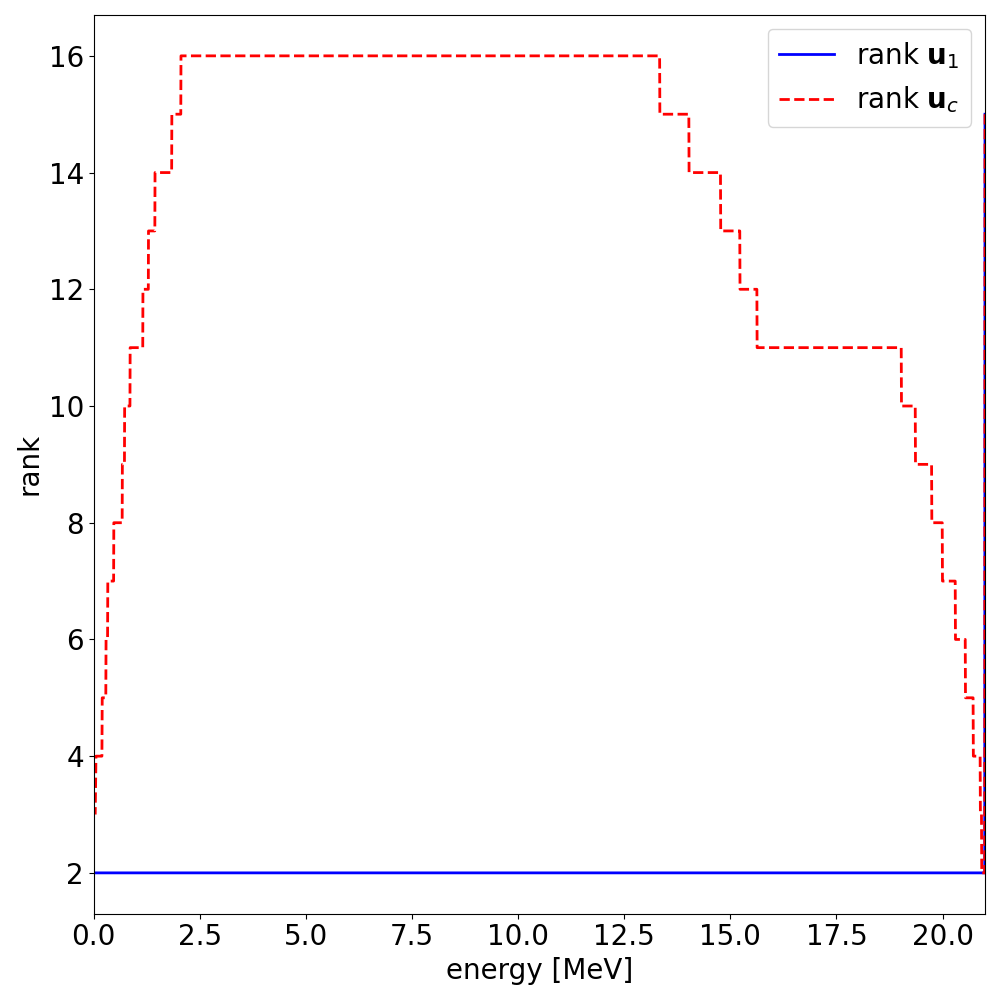}
  \caption{$\bar\vartheta = 0.01$}
  \label{fig:sub1}
\end{subfigure}%
\begin{subfigure}{.5\textwidth}
  \centering
  \includegraphics[width=\linewidth]{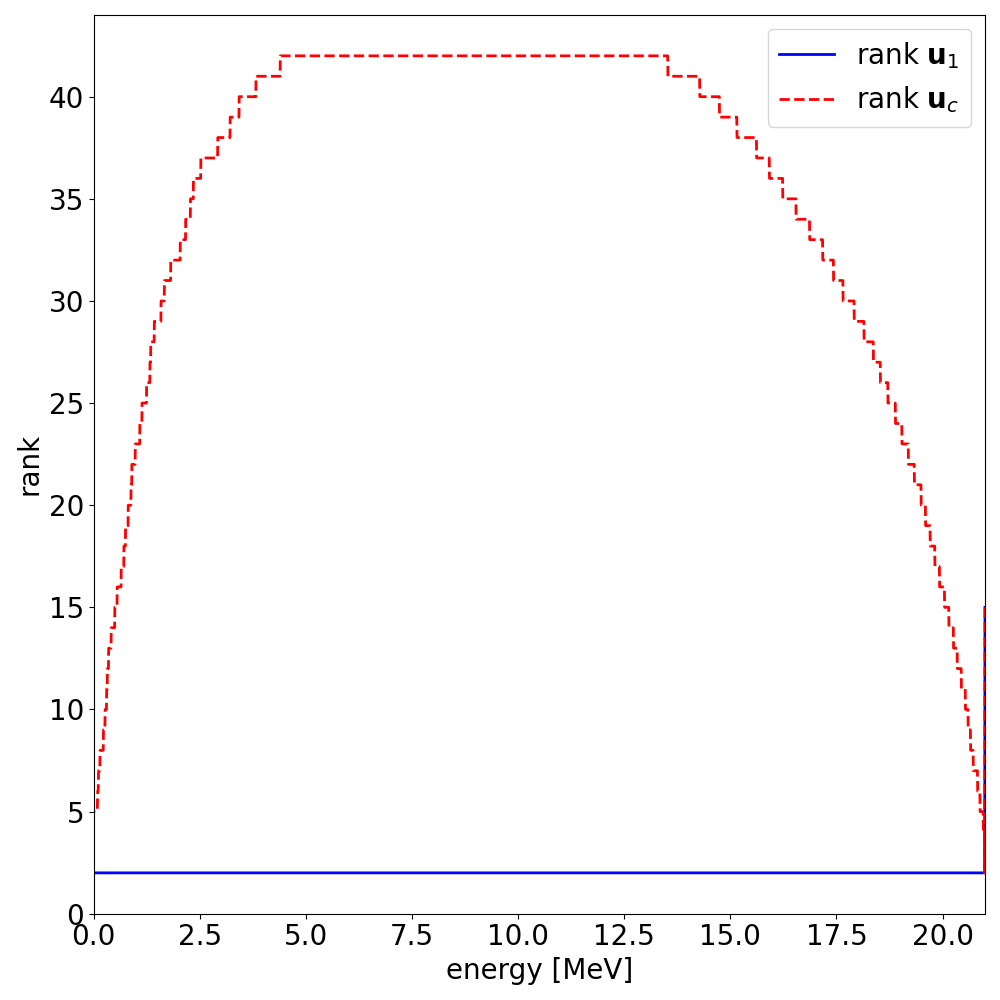}
  \caption{$\bar\vartheta = 0.001$}
  \label{fig:sub2}
\end{subfigure}
\caption{Rank evolution in energy for different $\vartheta = \bar\vartheta\cdot\Vert \mathbf{S} \Vert_F$.}
\label{fig:rankInEnergy}
\end{figure}
The corresponding ranks at different energies for $\bar{\vartheta}\in\{0.001,0.01\}$ are depicted in Figure~\ref{fig:rankInEnergy}. It is observed that the rank of collided particles remains small at low and high energies. At intermediate energies, the rank reaches its maximum. Particles that have collided once can be described with a small rank throughout the simulation. Note that particles which have collided once are not present for energies below $15$ MeV. The reason for this is that particles directly enter the patient tissue and are therefore directly subject to scattering.

The first four dominant spatial modes at the lowest energy are depicted in Figure~\ref{fig:spatialModesLung} and the first four dominant directional modes are shown in Figure~\ref{fig:directionalModesLung}. These modes have been computed by an SVD of the coefficient matrix $\mathbf{S} = \mathbf{U}\mathbf{D}\mathbf{V}^T$. We then plot the first four columns of $\mathbf{X}\mathbf{U}$ and $\mathbf{W}\mathbf{V}$. It is observed that the directional basis carries the information that particles are predominantly travelling into the $x$-direction, i.e. into the direction of the particle beam. The spatial basis encodes that particles with low energies are situated at the left of the CT scan and can mostly be found in high-density tissue. Note that due to the low energy of these particles, no significant contribution to the overall dose distribution is observed.
\begin{figure}[htp!]
    \centering
    \includegraphics[width=\linewidth]{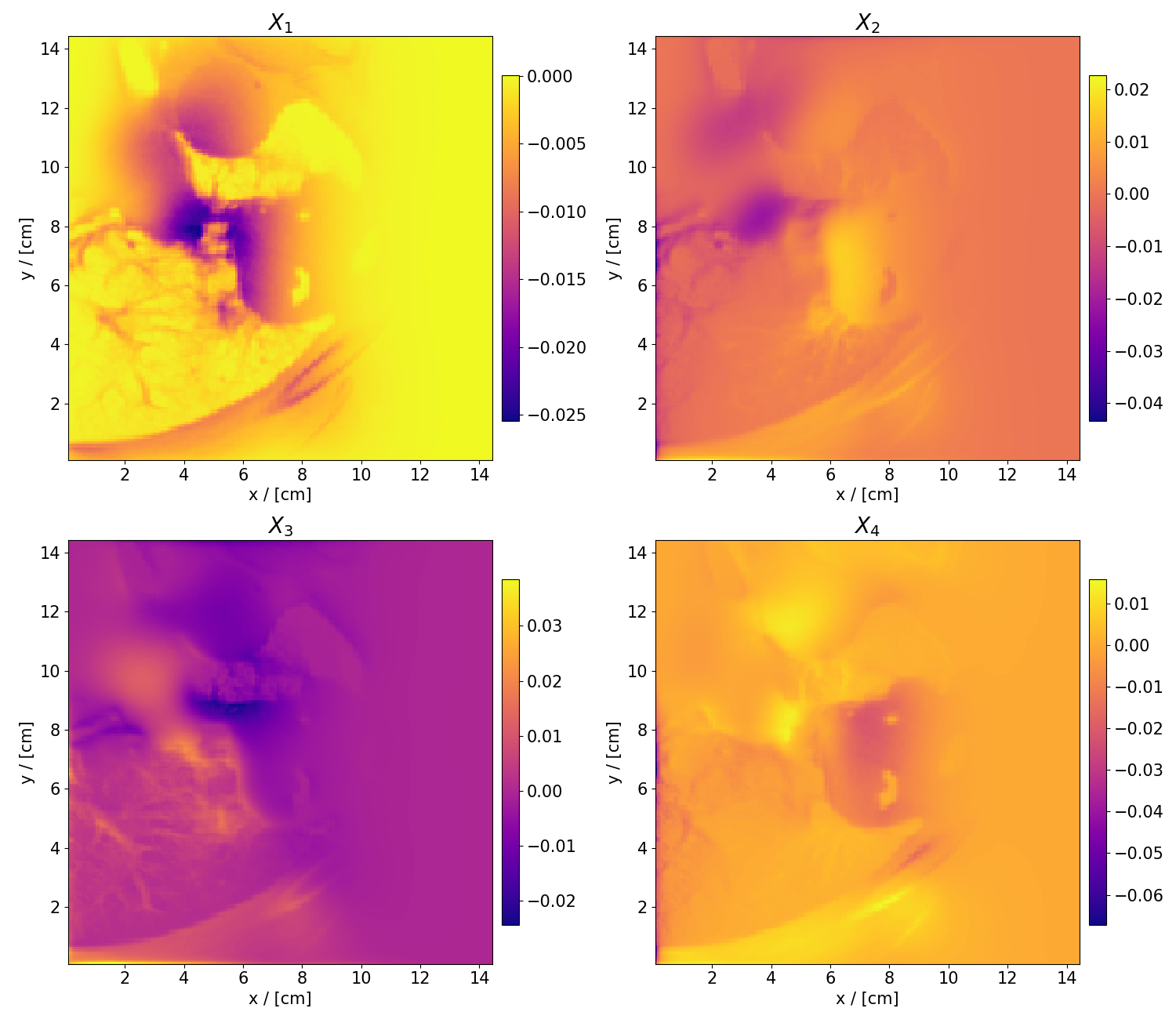}
    \caption{First four dominant spatial modes with fixed rank integrator.}
    \label{fig:spatialModesLung}
\end{figure}
\begin{figure}[htp!]
    \centering
    \includegraphics[width=\linewidth]{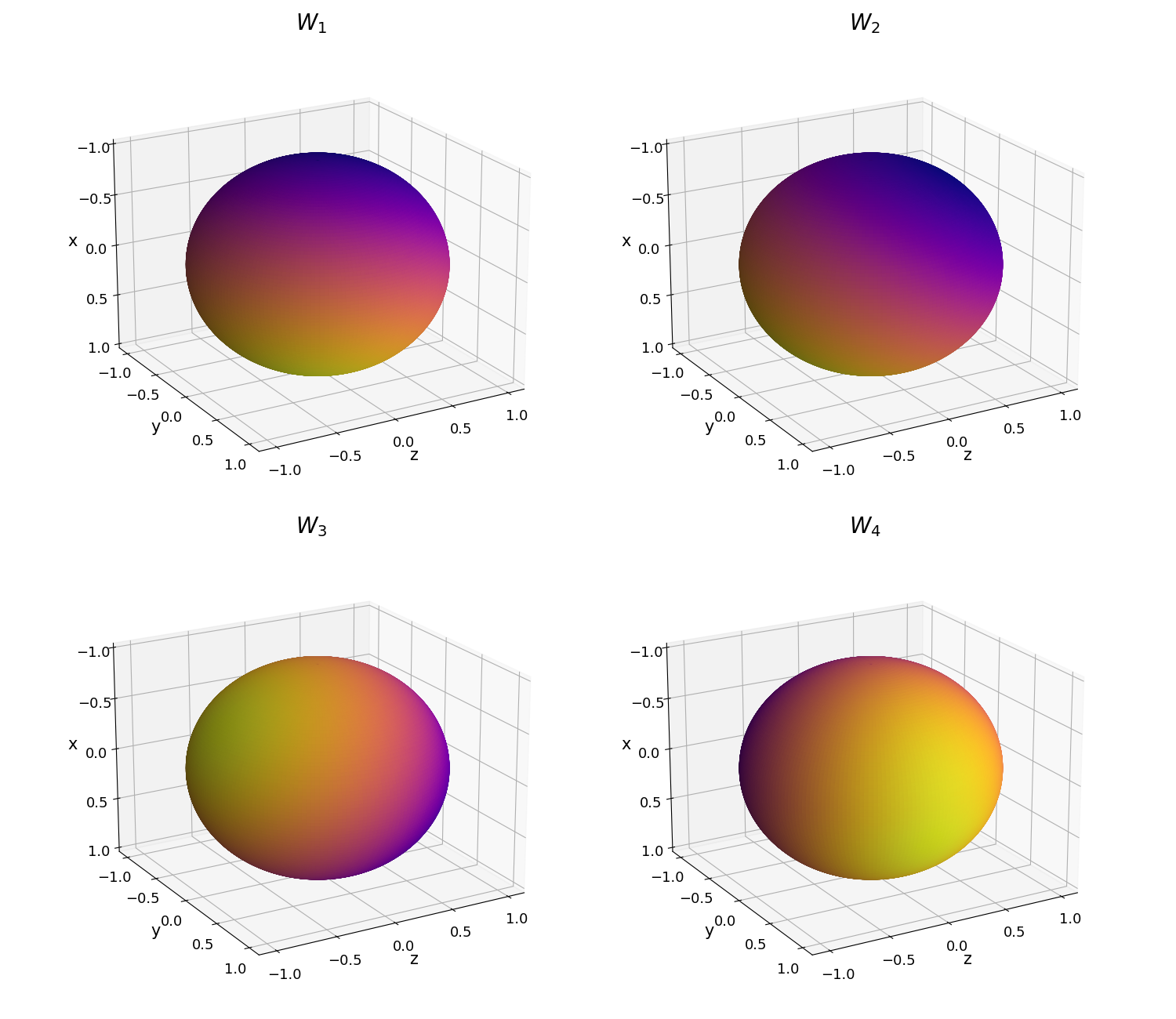}
    \caption{First four dominant directional modes with fixed rank integrator.}
    \label{fig:directionalModesLung}
\end{figure}

%and liver patient in comparison to a standard $P_N$ solution . The patient data was obtained from the open CORT dataset \cite{craft2014data}. \comment{Der Leber case kam aus diesem Datensatz, falls wir den verwenden} 

\section{Outlook}
In future work, we aim at using the proposed forward method to facilitate optimization and uncertainty quantification in radiation therapy. To improve the understanding of dynamical low-rank approximation in this new field of application, further validations, especially in 3D geometries and against the current standard pencil beam or MC algorithms are necessary. However, our results promise an approach which is efficient enough for a use in dose optimization, while still taking into account all relevant physical interactions.

\section*{Acknowledgments}
	Jonas Kusch has been funded by the Deutsche Forschungsgemeinschaft (DFG, German Research Foundation) --- Project-ID 258734477 --- SFB 1173. Pia Stammer is supported by the Helmholtz Association under the joint research school HIDSS4Health -- Helmholtz Information and Data Science School for Health.
	
	\bibliographystyle{abbrv}
	\bibliography{main} 
	
\end{document}